\documentclass{article}
\usepackage{amsmath,amsfonts,amssymb,amsthm}
\usepackage{mathtools}
\usepackage{dsfont}
\usepackage{enumerate}
\usepackage{esint}
\allowdisplaybreaks[4]

\newtheorem{theorem}{Theorem}[section]
\newtheorem{proposition}[theorem]{Proposition}
\newtheorem{lemma}[theorem]{Lemma}

\newtheorem{remark}{Remark}[section]

\usepackage[colorlinks=true,linkcolor=blue,citecolor=green]{hyperref}

\usepackage[margin=2cm]{geometry}
\newlength{\bibitemsep}\setlength{\bibitemsep}{.0\baselineskip plus .05\baselineskip minus .05\baselineskip}
\newlength{\bibparskip}\setlength{\bibparskip}{0pt}
\let\oldthebibliography\thebibliography
\renewcommand\thebibliography[1]{%
  \oldthebibliography{#1}%
  \setlength{\parskip}{\bibitemsep}%
  \setlength{\itemsep}{\bibparskip}%
}


\numberwithin{equation}{section}

\begin{document}

\title{\Large\bf On the non-uniqueness of transport equation: the quantitative relationship between temporal and spatial regularity}
\author{Jingpeng Wu\thanks{School of Mathematics and Statistics; Institute of Artificial Intelligence, Huazhong University of Science and Technology, Wuhan, 430074, China (jpwu\_postdoc@hust.edu.cn).}}
\date{}
\maketitle

\begin{abstract}
In this paper, we consider the non-uniqueness of transport equation on the torus $\mathbb{T}^d$, with density $\rho\in L^{s}_tL_x^{p}$ and divergence-free vector field $\boldsymbol{u}\in L^{s'}_tL_x^{p'}\cap L^{\tilde{s}}_tW_x^{1,\tilde{p}}$. We prove that the non-uniqueness holds for $\frac{1}{p}+\frac{\tilde{s}'}{s\tilde{p}}>1+\frac{1}{d-1}$, with $d\ge 2$ and $s,p,\tilde{p}\in[1,\infty)$, $1\le\tilde{s}<s'$. The result can be extended to the transport-diffusion equation with diffusion operator of order $k$ in the class $\rho\in L^{s}_tL_x^{p}\cap L_t^{\bar{s}}C_x^{\bar{m}}$, $\boldsymbol{u}\in L^{s'}_tL_x^{p'}\cap L^{\tilde{s}}_tW_x^{1,\tilde{p}}$, under some conditions on $\bar{s},\bar{m},k$. In particular, when $\tilde{s}=1$, the additional condition is $\bar{m}<\frac{s}{\bar{s}}-1$, $k<\frac{s}{s'}+1$. These results can be considered as quantitative versions of Cheskidov and Luo's \cite{CL21,CL22}. The main tool is the convex integration developed by \cite{MS18,MS19,CL21,CL22}.

\noindent{\bf Keywords:} Transport equation; non-uniqueness; convex integration

\noindent{\bf MR Subject Classification:} 35A02; 35D30; 35Q35
\end{abstract}


\section{Introduction and main results}

This paper deals with the problem of (non-)uniqueness of weak solution to the linear transport equation on the torus $\mathbb{T}^d:=\mathbb{R}^d\setminus\mathbb{Z}^d$ written by
\begin{equation}\label{eq-TE}
\partial_t\rho+\boldsymbol{u}\cdot\nabla \rho=0,
\end{equation}
where $\rho\colon[0,T]\times\mathbb{T}^d\to\mathbb{R}$ is the unknown density, $\boldsymbol{u}\colon[0,T]\times\mathbb{T}^d\to\mathbb{R}^d$ is a given divergence-free vector field, i.e.~$\operatorname{div} \boldsymbol{u}=0$ in the sense of distribution. 

It is well known that for $ \boldsymbol{u}\in C_tW_x^{1,\infty}$, the well-posedness of \eqref{eq-TE} can be established by the method of characteristics. For non-Lipschitz vector fields, although the existence of weak solutions can be obtained by the method of regularization even for very rough vector fields, the uniqueness issue becomes subtler. The first breakthrough result traces back to the celebrated DiPerna-Lions theory \cite{DL89ODE} for Sobolev vector fields, then extended to BV vector fields by Ambrosio \cite{Amb04} using deep tools from geometric measure theory. Whereafter, there are abundant researches for the theory of non-smooth vector fields and their applications on non-linear PDEs, interested readers can refer to the surveys \cite{AC08,Amb17}. 

For the non-uniqueness issue, there have been examples at the Lagrangian level and at the Eulerian level, see details on the background and historical development in \cite{MS18,CL21}. In this paper, we mainly study the Eulerian construction of non-uniqueness, which was firstly obtained in \cite{CGSW15} for bounded vector fields, using the method of convex integration from \cite{DS09}. Later, a breakthrough result for the Sobolev vector field was obtained by Modena and Sz\'ekelyhidi \cite{MS18}. Based on their work, a series of works for the non-uniqueness to \eqref{eq-TE} have been done recently. When consider $\rho\in C_tL_x^{p}$, $\boldsymbol{u}\in C_tW_x^{1,q}$, the existence of non-unique weak solutions was established for $\frac{1}{p}+\frac{1}{q}>1+\frac{1}{d}$ in \cite{MS18,MS19,MS20}, and also in \cite{BCD21} with positive solutions. On the other hand, the DiPerna-Lions theory \cite{DL89ODE} provides the uniqueness for $\frac{1}{p}+\frac{1}{q}\le 1$. Moreover, the uniqueness holds for positive solutions with $\frac{1}{p}+\frac{1}{q}<1+\frac{1}{dp}$ \cite{BCD21}, and for $\frac{1}{p}+\frac{1}{q}<1+\frac{1}{d}$ with $p=1$ as well as the assumption that the so-called forward-backward integral curves are trivial \cite{CC21}. Hence it seems convincing that $\frac{1}{p}+\frac{1}{q}=1+\frac{1}{d}$ is the borderline situation for the uniqueness of the transport equation.

However, it was showed by Cheskidov and Luo \cite{CL21} that the existence of non-unique solutions can be achieved in the regime $\frac{1}{p}+\frac{1}{q}>1$ at the expense of temporal regularity. Further, the same authors \cite{CL22} provided the non-uniqueness for $\rho\in\cap_{s<\infty,k\in\mathbb{N}}L_t^sC_x^k$ and $u\in\cap_{q<\infty}L_t^1W_x^{1,q}$, which implies that the temporal integrability assumption in the uniqueness of the DiPerna-Lions theory is sharp. Such sharpness was also discovered in a revisited paper \cite{WZ23} of \cite{CL21}.

Hence, a natural question is whether there is a quantitative relationship between the spatial and temporal regularity for the non-uniqueness of weak solutions to \eqref{eq-TE}. More precisely, the question is to consider $\rho\in L^{s}_tL_x^{p}$, $\boldsymbol{u}\in L^{s'}_tL_x^{p'}\cap L^{\tilde{s}}_tW_x^{1,\tilde{p}}$, and search some function $f(s,p,\tilde{s},\tilde{p})$ and a constant $F_d$ such that the non-uniqueness holds for $f(s,p,\tilde{s},\tilde{p})>F_d$.

In this paper, we study the case $f=\frac{1}{p}+\frac{\tilde{s}'}{s\tilde{p}}$, $F_d=1+\frac{1}{d-1}$ and prove the following main theorem.
\begin{theorem}\label{thm1.1}
Let $d\ge 2$, $s,p,\tilde{p}\in[1,\infty)$, $1\le\tilde{s}<s'$ and
\begin{equation}\label{eq-assum}
\frac{1}{p}+\frac{\tilde{s}'}{s\tilde{p}}>1+\frac{1}{d-1}.
\end{equation}

Then there exists a divergence-free vector field
\begin{equation*}
\boldsymbol{u}\in L^{s'}(0,T;L^{p'}(\mathbb{T}^d))\cap L^{\tilde{s}}(0,T;W^{1,\tilde{p}}(\mathbb{T}^d))
\end{equation*}
such that the uniqueness of \eqref{eq-TE} fails in the class $\rho\in L^s(0,T;L^{p}(\mathbb{T}^d))$.
\end{theorem}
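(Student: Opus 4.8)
The plan is to build the non-unique solutions via convex integration, following the scheme of Modena--Székelyhidi \cite{MS18,MS19} and Cheskidov--Luo \cite{CL21,CL22}, but tracking carefully how the temporal integrability is spent. I would set up an iteration producing a sequence of smooth solutions $(\rho_q,\boldsymbol{u}_q,R_q)$ to the continuity-defect equation $\partial_t\rho_q+\operatorname{div}(\rho_q\boldsymbol{u}_q)=\operatorname{div} R_q$ with $\operatorname{div}\boldsymbol{u}_q=0$, where $R_q$ is a vector field that goes to zero as $q\to\infty$. The perturbations $\vartheta_q=\rho_{q+1}-\rho_q$ and $w_q=\boldsymbol{u}_{q+1}-\boldsymbol{u}_q$ are concentrated Mikado-type densities and fields oscillating at a large frequency $\lambda_{q+1}$, with a concentration parameter governing how sharply they are localized in space; the novelty here is that one also modulates a \emph{temporal} cutoff/oscillation so that the $L^s_t$ norm of $\rho$ and the $L^{s'}_t L^{p'}_x\cap L^{\tilde s}_t W^{1,\tilde p}_x$ norms of $\boldsymbol{u}$ are each controlled by a different power of the frequency. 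The degrees of freedom are: the spatial oscillation frequency $\lambda$, the spatial concentration exponents producing the $L^p_x$ vs.\ $L^{p'}_x$ vs.\ $W^{1,\tilde p}_x$ scalings, and the temporal concentration exponent $\tau$ relating $L^s_t$ to $L^{s'}_t$ and $L^{\tilde s}_t$.

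The key steps, in order, are: (i) state the main iteration proposition with inductive bounds — a bound on $\|R_q\|_{L^1}$ decaying like $\delta_{q+1}$, bounds on $\|\vartheta_q\|_{L^s_tL^p_x}$ summable and on $\|w_q\|$ in the two target spaces summable, and a frequency-separation condition; (ii) choose the building blocks: spatial Mikado densities/flows supported near a periodic set of tubes of radius $\sim\lambda^{-1}r_\perp^{-1}$ (using the $d-1$ transversal directions, which is where the $\frac{1}{d-1}$ enters), multiplied by a temporal profile $g_\tau(t)$ normalized so $\|g_\tau\|_{L^s}\sim 1$ while $\|g_\tau\|_{L^{s'}}\sim\tau^{1/s-1/s'}$ and $\|g_\tau\|_{L^{\tilde s}}\sim\tau^{1/s-1/\tilde s}$ — this is exactly the mechanism that makes $\tilde s<s'$ usable; (iii) define $\vartheta_{q+1}$ to cancel the low-frequency part of $R_q$ (principal term $\approx -R_q$ after averaging), add a divergence/time corrector for $\rho$ and a corrector keeping $\boldsymbol{u}$ divergence-free; (iv) estimate the new Reist $R_{q+1}$ by splitting into the oscillation error, the transport error (which is where $\partial_t g_\tau$ costs a factor $\tau$ and forces a relation between $\tau$ and $\lambda$), the Nash error, and the corrector error, applying the improved antidivergence/stationary-phase lemma from the convex-integration toolbox; (v) optimize the exponents: one sets $r_\perp,r_\parallel$ (spatial concentration) and $\tau$ as powers of $\lambda$, and checks that all error terms beat $\delta_{q+2}$ while all perturbation norms stay summable — this yields precisely the admissible region $\frac1p+\frac{\tilde s'}{s\tilde p}>1+\frac1{d-1}$ with the constraint $\tilde s<s'$; (vi) pass to the limit $\rho=\lim\rho_q$, $\boldsymbol{u}=\lim\boldsymbol{u}_q$, obtaining a weak solution of \eqref{eq-TE} in the stated classes, and arrange the iteration (by choosing $\rho_0$ to have prescribed nonzero, e.g.\ mean-zero, data and $R_0$ small on a subinterval) so that $\rho$ differs from the trivial solution while sharing initial data — giving non-uniqueness.

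The main obstacle I expect is the simultaneous bookkeeping in step (v): the transport error requires $\partial_t$ of the temporal profile to be reabsorbed, which ties $\tau$ to $\lambda$ and to the spatial concentration parameters, and at the same time the Nash error and the oscillation error pull the concentration exponents in competing directions; making the $W^{1,\tilde p}_x$ bound on $\boldsymbol{u}$ summable is the tightest constraint because each spatial derivative on a concentrated flow costs the full frequency $\lambda$ times the concentration factor. Getting a single choice of $(\lambda_q,r_{\perp,q},r_{\parallel,q},\tau_q)$ — as super-exponential / power sequences in $q$ — that makes every error term summable is the heart of the argument, and it is exactly this optimization that forces the numerology $1+\frac{1}{d-1}$ and the role of $\tilde s'/(s\tilde p)$. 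A secondary technical point is choosing the temporal profile $g_\tau$ so that its time-derivative cost is sharp (e.g.\ a $\tau$-periodic rescaling of a fixed bump), and verifying the anti-divergence estimates in the anisotropic (tube) geometry, for which I would invoke the concentrated-Mikado estimates already available in \cite{CL21,CL22}.
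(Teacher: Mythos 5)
Your overall strategy coincides with the paper's: a continuity--defect iteration with concentrated Mikado densities/fields modulated by a temporal intermittency profile, with the gain coming from measuring the velocity's $W^{1,\tilde p}_x$ norm in the weaker $L^{\tilde s}_t$ norm (this is the paper's pair $\tilde g_j^{\kappa,\lambda},\bar g_j^{\kappa,\lambda}$ and the parameter bookkeeping $\sigma\simeq\mu^{\alpha}$, $\kappa=\mu^{\beta}$, $\lambda\simeq\mu^{\gamma}$, which indeed produces exactly \eqref{eq-assum}). For $p>1$ your plan is essentially the CL-type scheme the paper itself says suffices. The genuine gap is the case $p=1$, which the theorem includes. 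With the density blocks normalized in $L^1_x$ one has $\|\Theta_j^{\mu,\sigma}\|_{L^1_x}\sim 1$, so the advection term $\theta_p\boldsymbol{u}$ (equivalently the part of the transport error where $\boldsymbol{u}\cdot\nabla$ falls on the fast spatial oscillation) cannot be made small by any choice of your parameters $\lambda,r_\perp,r_\parallel,\tau$: there is no excess spatial integrability left to absorb it. This is precisely why the paper composes the building blocks with the inverse flow maps $\Phi_i$ of $\boldsymbol{u}$ on a temporal partition, so that the material derivative annihilates the fast part and only derivatives of the slow coefficients survive; your proposal never engages with this term. Moreover, importing MS19 wholesale does not close the gap either: its treatment of the interaction between neighboring flow maps uses two nested groups of spatial concentration/oscillation parameters whose constraints are incompatible with \eqref{eq-assum}. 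The paper's fix is new: the time partition $\{\zeta_i\}$ is chosen with overlaps of measure $O(\lambda^{-1/2})$ and $\|\dot\zeta_i\|\lesssim\lambda^{1/2}$, so that $\boldsymbol{R}_{\rm interact}$ is killed by temporal oscillation, and the transport error is rewritten through the identity $a\operatorname{div}\boldsymbol{\Omega}\circ\Phi=\operatorname{div}\bigl(a(\nabla\Phi)^{-1}\boldsymbol{\Omega}\circ\Phi\bigr)-\nabla a\cdot\bigl[(\nabla\Phi)^{-1}\boldsymbol{\Omega}\circ\Phi\bigr]$ before applying the anti-divergence. None of these ingredients appear in your outline, and without them the error estimates in your step (iv) fail at $p=1$.

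A secondary, fixable omission is the non-uniqueness endgame. Since the limit density is only $L^s$ in time, you cannot evaluate it at $t=0$ directly: the iteration must also propagate a weak control of the spatial means, $\bigl\vert\int_{\mathbb{T}^d}(\rho^{n+1}-\rho^n)\phi\,\mathrm{d}x\bigr\vert\le\delta_{n+1}\|\phi\|_{C^N}$, together with temporal supports in $I_{r_n}$ (the paper's \eqref{eq-10}--\eqref{eq-11}), to get distributional continuity in time and attainment of the prescribed data; and one then needs a quantitative argument (the paper compares $\|\rho(t)\|_{L^p_x}$ on $[T/4,3T/4]$ versus near the endpoints, using only the $L^s_tL^p_x$ closeness to $\chi(t)\bar\rho$) to rule out that the constructed solution coincides a.e.\ with the solution starting from the same data. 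Your plan asserts this conclusion but does not indicate either mechanism.
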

\begin{remark}\label{rem-1.1}
When $\tilde{s}=1$, $\tilde{s}'=\infty$, \eqref{eq-assum} holds for all $s,p,\tilde{p}<\infty$. The condition \eqref{eq-assum} is non-trivial for all $1\le\tilde{s}<s'$. For  $\tilde{s}=s'$, it degenerates into the Modena-Sz\'ekelyhidi condition $\frac{1}{p}+\frac{1}{\tilde{p}}>1+\frac{1}{d-1}$.
\end{remark}

\begin{remark}
It is interesting to consider the case $F_d=1+\frac{1}{d}$, i.e., the Modena-Sattig constant \cite{MS20}. It seems challenging to apply directly the method of \cite{MS20} to this paper, we are planning to investigate this issue in the future.
\end{remark}

\begin{remark}
A further challenging but also highly valuable problem is to uncover optimal $f(s,p,\tilde{s},\tilde{p})$ and $F_d$ such that the non-uniqueness holds for $f(s,p,\tilde{s},\tilde{p})>F_d$ and the uniqueness holds for $f(s,p,\tilde{s},\tilde{p})<F_d$.
\end{remark}

As in \cite{MS20,CL22}, Theorem~\ref{thm1.1} can be extended to the transport-diffusion equation with solutions with higher regularity.
\begin{theorem}\label{thm1.2}
Let $d\ge 2$, $s,p,\tilde{p}\in[1,\infty)$, $1\le\tilde{s}<s'$ and
\begin{align}\label{eq-assum-2}
&\frac{1}{p}+\frac{\tilde{s}'}{s\tilde{p}}>1+\frac{1}{d-1}.
\end{align}
Moreover, assume $\bar{m},k\in\mathbb{N}$, $1\le\bar{s}<s$ and
\begin{align}
\bar{m}<\frac{s}{\bar{s}}-1-(d-1)\left(\frac{s}{\bar{s}p'}-1\right)^{-}\text{ if }\tilde{s}>1,&\quad \bar{m}<\frac{s}{\bar{s}}-1\text{ if }\tilde{s}=1,\label{eq-assum-3}\\
k<\frac{s}{s'}+1-\frac{d-1}{p'}\left(\frac{s}{s'}-1\right)^{-}\text{ if }\tilde{s}>1,&\quad k<\frac{s}{s'}+1\text{ if }\tilde{s}=1.\label{eq-assum-4}
\end{align}

Then there exists a divergence-free vector field
\begin{equation*}
\boldsymbol{u}\in L^{s'}(0,T;L^{p'}(\mathbb{T}^d))\cap L^{\tilde{s}}(0,T;W^{1,\tilde{p}}(\mathbb{T}^d))
\end{equation*}
such that the uniqueness of the transport-diffusion equation  
\begin{equation}\label{eq-TDE}
\partial_t\rho+\boldsymbol{u}\cdot\nabla \rho+\mathcal{L}_k\rho=0
\end{equation}
fails in the class $\rho\in L^{s}(0,T;L^{p}(\mathbb{T}^d))\cap L^{\bar{s}}(0,T;C^{\bar{m}}(\mathbb{T}^d))$, where $\mathcal{L}_k$ is a given constant coefficient differential operator of order $k$.

\end{theorem}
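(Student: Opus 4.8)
The plan is to run the convex integration scheme from Theorem~\ref{thm1.1} with an additional mollification/cutoff that upgrades the density to the higher-regularity class $L^{\bar{s}}_tC^{\bar{m}}_x$ while keeping $\mathbf{u}$ in the same function spaces, and to absorb the extra term $\mathcal{L}_k\rho$ into the error at each iteration step exactly as in \cite{MS20,CL22}. Concretely, I would set up a modified inductive proposition for the continuity-defect pair $(\rho_q,\mathbf{u}_q,R_q)$ solving $\partial_t\rho_q+\operatorname{div}(\rho_q\mathbf{u}_q)+\mathcal{L}_k\rho_q=\operatorname{div}R_q$, where $\rho_q$ is a finite sum of spatial building blocks oscillating at frequency $\lambda_q$ and time-intermittent cutoffs, so that spatial derivatives of $\rho_q$ up to order $\bar m$ cost powers of $\lambda_q$ and the inductive bound on $\|\rho_q\|_{L^{\bar{s}}_tC^{\bar{m}}_x}$ must be shown to stay bounded along the iteration. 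The new perturbations $\rho_{q+1}-\rho_q=w_{q+1}$ are the same Mikado-type density/field pairs used for Theorem~\ref{thm1.1}, with parameters $\lambda_{q+1},\mu_{q+1},\nu_{q+1}$ and temporal concentration exponents chosen as before; the only genuinely new requirements are (i) the new error coming from $\mathcal{L}_k w_{q+1}$ (roughly of size $\lambda_{q+1}^{k}$ times the $L^p$-amplitude of $w_{q+1}$, modulated by the time cutoff) is small enough to be cancelled at the next stage, and (ii) the accumulated $C^{\bar m}_x$-norm in $L^{\bar s}_t$ of the telescoping series $\sum_q w_{q+1}$ converges.

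The key steps, in order: first, restate the inductive proposition adding the uniform bounds $\|\rho_q\|_{L^{\bar s}_t C^{\bar m}_x}\le M$ (a fixed constant) and the defect estimate $\|R_q\|_{L^1_{t,x}}\le\delta_{q+1}$, together with the old $L^s_tL^p_x$, $L^{s'}_tL^{p'}_x$ and $L^{\tilde s}_tW^{1,\tilde p}_x$ bounds; second, recall the construction of $w_{q+1}$ and the bookkeeping of its norms in each space, as functions of $\lambda_{q+1}$ and the temporal-oscillation parameters — here the interplay of $\tilde s,s,\tilde p$ reproduces the left side of \eqref{eq-assum-2} and the interplay of $\bar s,s,p'$ reproduces \eqref{eq-assum-3}, while $s',s,p'$ and the order $k$ reproduce \eqref{eq-assum-4}; third, estimate the new Reynolds-type error $R_{q+1}$, splitting it into the transport/oscillation error (as in Theorem~\ref{thm1.1}), the quadratic interaction error, the temporal-corrector error, and the diffusion error $\mathcal{L}_k w_{q+1}$, and verify each is $\le\delta_{q+2}$ for a suitable geometric sequence $\delta_q$; fourth, check that the $C^{\bar m}_x$ cost of $w_{q+1}$, namely $\sim\lambda_{q+1}^{\bar m}$ times its $L^\infty_x$-amplitude times its temporal $L^{\bar s}_t$ norm, is summable in $q$ — this is where the strict inequalities in \eqref{eq-assum-3} enter, giving a small positive power of $\lambda_{q+1}$ in the denominator; fifth, pass to the limit $\rho=\lim\rho_q$, $\mathbf{u}=\lim\mathbf{u}_q$, obtain the weak solution of \eqref{eq-TDE} with $R_\infty=0$, and arrange (by starting the iteration from two different seeds, e.g. $\rho_0\equiv0$ versus a nonzero mean-free datum, or by the standard argument producing a nonzero solution with zero initial data) that the solution is not unique.

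I expect the main obstacle to be the simultaneous verification of the \emph{three} competing constraints \eqref{eq-assum-2}, \eqref{eq-assum-3}, \eqref{eq-assum-4}: the parameters $\lambda_{q+1},\mu_{q+1},\nu_{q+1}$ and the temporal concentration exponent must be chosen so that (a) the oscillation error decays, (b) the $W^{1,\tilde p}_x$-norm of $\mathbf{u}$ stays controlled in $L^{\tilde s}_t$, (c) the $C^{\bar m}_x$-norm of $\rho$ stays controlled in $L^{\bar s}_t$, and (d) the diffusion error of order $k$ is subcritical. Each of (b),(c),(d) eats into the margin available in the basic non-uniqueness inequality, and one must show the margin from \eqref{eq-assum} minus the degradation from \eqref{eq-assum-3}--\eqref{eq-assum-4} is still strictly positive, which forces the precise form of the correction terms $(d-1)(\tfrac{s}{\bar s p'}-1)^{-}$ and $\tfrac{d-1}{p'}(\tfrac{s}{s'}-1)^{-}$; when $\tilde s>1$ these arise because the temporal cutoffs can no longer be taken with unlimited concentration. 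The secondary technical point is handling $\mathcal{L}_k$ as a general constant-coefficient operator of order $k$ rather than a power of the Laplacian: since it only produces a term bounded in $L^1_{t,x}$ by $\lambda_{q+1}^k$ times the density amplitude, it can be treated as an additional contribution to the defect field via a standard antidivergence (e.g. the operator $\mathcal{R}$) with the usual gain of one derivative, so the general $\mathcal{L}_k$ costs no more than $|\nabla|^k$ up to constants, and the conditions \eqref{eq-assum-4} with $\tilde s=1$ reduce to $k<\tfrac{s}{s'}+1$ as stated.
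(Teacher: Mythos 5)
Your overall strategy is the same as the paper's: rerun the scheme of Theorem~\ref{thm1.1}, treat the new term by introducing a single extra error $\boldsymbol{R}_{\rm diffusion}$ with $\operatorname{div}\boldsymbol{R}_{\rm diffusion}=\mathcal{L}_k\theta$, add the estimate of $\|\theta\|_{L^{\bar s}_tC^{\bar m}_x}$ to the iteration, and re-balance the parameters $\mu,\sigma,\kappa,\lambda$; your reading of where \eqref{eq-assum-3} and \eqref{eq-assum-4} come from, and why the correction terms appear only for $\tilde s>1$ (the admissible spatial oscillation exponent is then capped by \eqref{eq-assum-2}), matches Sec.~\ref{sec-proof-of-theorem-2}.

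The genuine gap is in your treatment of the diffusion error. You assert that $\mathcal{L}_k\theta$ ``can be treated \dots via a standard antidivergence (e.g.\ the operator $\mathcal{R}$) with the usual gain of one derivative.'' But the density perturbation is $\theta_p=\sum_{i,j}\eta\,\tilde g_j^{\kappa,\lambda}a_j\zeta_i\,\Theta_j^{\mu,\sigma}\circ\Phi_i$, composed with the inverse flow maps $\Phi_i$ (unavoidable here since $p=1$ is allowed), and for such compositions neither $\mathcal{R}$ nor the improved antidivergence \eqref{eq-Wkp-bound-anti-with-inverse-map} gains a derivative at the full effective frequency $\sigma\mu$: the gain is only $\sigma^{-1}$. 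The paper instead rewrites $\theta_p$ in divergence form through the Mikado potential, using $\operatorname{div}[(\nabla\Phi_i)^{-1}\boldsymbol{\Omega}_j^{\mu,\sigma}\circ\Phi_i]=\sigma\,\Theta_j^{\mu,\sigma}\circ\Phi_i$ (i.e.\ \eqref{eq-div-inverse-map} combined with \eqref{eq-7}), commutes the constant-coefficient $\mathcal{L}_k$ with $\operatorname{div}$, and applies $\mathcal{R}$ only to the lower-order commutator term; this is what yields $\|\boldsymbol{R}_{\rm diffusion}\|_{L^1_{t,x}}\lesssim\kappa^{-1/s'}\sigma^{k-1}\mu^{k-1-\frac{d-1}{p'}}$, with exponent $k-1$ on \emph{both} $\sigma$ and $\mu$ (the same divergence-form rewriting handles $\mathcal{L}_k\theta_o$ via $\theta_o=\operatorname{div}\sum_j\lambda^{-1}h_j^{\kappa,\lambda}\chi_j^2R_j\boldsymbol{e}_j$). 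With only the $\sigma^{-1}$ gain your bound would be $\kappa^{-1/s'}\sigma^{k-1}\mu^{k-\frac{d-1}{p'}}$, which replaces $\beta_3=\bigl((\alpha+1)(k-1)-\frac{d-1}{p'}\bigr)s'$ by $\beta_3+s'$ in the parameter bookkeeping. For $\tilde s=1$ this is harmless, because $\alpha$ can be taken arbitrarily large and $k<\frac{s}{s'}+1$ is still recovered; but for $\tilde s>1$ the admissible $\alpha$ is bounded above by $\frac{(d-1)\tilde s'}{s\tilde p}-1-\frac{d-1}{p'}$, and the extra $s'$ gives a strictly stronger restriction on $k$ than \eqref{eq-assum-4}, so part of the claimed range would be lost. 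To prove the theorem as stated for $\tilde s>1$ you must build the potential-based divergence-form rewriting (the paper's second main idea in the introduction) into the definition of $\boldsymbol{R}_{\rm diffusion}$, rather than invoking a generic antidivergence bound.
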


\begin{remark}
Note for $\tilde{s}=1$ and $s\gg\max\{\bar{s},s'\}$, $\bar{m},k$ can be arbitrarily large, which is a quantitative description of Cheskidov and Luo's result \cite{CL22}. However, it is still not enough to answer the question (Q2) raised in \cite{BCC23}, i.e., whether non-uniqueness holds for advection-diffusion equation in the class $\boldsymbol{u}\in L_t^{2}L_x^{q}$, $\rho\in L_t^2H_x^{1}$ with $\frac{2d}{d+2}\le q<2$. Indeed, let $\mathcal{L}_k=-\Delta$, $\tilde{s}=\tilde{p}=\bar{m}=1$, $\bar{s}=2$, then Theorem~\ref{thm1.2} states that the non-uniqueness holds for \eqref{eq-TDE} in the class $\boldsymbol{u}\in L_t^{s'}L_x^{p'}\cap L_t^1W_x^{1,1}$, $\rho\in L_t^{s}L_x^{p}\cap L_t^2C_x^{1}$, for all $p\in[1,\infty)$, $s\in(4,\infty)$. In this case $s'<4/3$. Combing with Remark~\ref{rem-1.1}, our results seem to suggest that when both temporal integrability of $\boldsymbol{u}$ and $\rho$ are higher than a certain level, the time intermittency technique will be ineffective to obtain the non-uniqueness of weak solutions to \eqref{eq-TDE} with spatial regularity higher than the Modena-Sattig-Sz\'ekelyhidi setting.
\end{remark}

\begin{remark}
Similar results seem to hold for the following nonlinear transport equations
\begin{equation*}
\partial_t\rho+\boldsymbol{u}\cdot\nabla \rho+\mathcal{L}_k\rho+P(\rho)=0,
\end{equation*}
where $P\in C^{\infty}(\mathbb{R})$ satisfying $\vert P(x)-P(y)\vert\lesssim\vert x-y\vert^{\min(s,p)}$.
Indeed, we just need to modify the proof of Theorem~\ref{thm1.1} as in Sec.~\ref{sec-proof-of-theorem-2} by additionally estimating the nonlinear term
\begin{equation*}
\boldsymbol{R}_{\rm nonlinear}=\mathcal{R}[P(\rho+\theta)-P(\rho)].
\end{equation*}
\end{remark}


\subsection{Ideas of the proof}
For the case $p>1$, Theorem~\ref{thm1.1} can be proved by  the convex integration scheme of \cite{CL21,CL22}. The case $p=1$ is more delicate, since the method in \cite{MS19} can not be applied directly. We briefly describe two main ideas in this paper.
\begin{itemize}
\item  In \cite{MS19}, to deal with $\boldsymbol{R}_{\rm interact}$, there are two groups of parameters $(\mu',\sigma')$, $(\mu'',\sigma'')$ for spatial intermittency and oscillation satisfying $1\ll\mu'\ll\sigma'\ll\mu''\ll\sigma''$, of which the conditions always contradict \eqref{eq-assum}. Instead, we take advantage of the benefits of temporal intermittency and oscillation. More precisely, we require the partition of unity $\{\zeta_i\}$ used in \cite{MS19} satisfying a decay condition with respect to the temporal oscillation $\lambda$, see Sec.~\ref{subsec-inverse-flow}.

\item To deal with the defect field induced by the linear terms containing $\theta_p$, i.e., $\operatorname{div}\boldsymbol{R}=\theta_p$, where $\theta_p$ is of the form $a\operatorname{div}\boldsymbol{\Omega}\circ\Phi$ and $\Phi$ is the inverse flow map introduced in \cite{MS19}, we frequently use the following identity
\begin{equation*}
a\operatorname{div}\boldsymbol{\Omega}\circ\Phi=\operatorname{div}(a(\nabla\Phi)^{-1}\boldsymbol{\Omega}\circ\Phi)-\nabla a\cdot[(\nabla\Phi)^{-1}\boldsymbol{\Omega}\circ\Phi],
\end{equation*}
which implies that $\boldsymbol{R}$ can be constructed as
\begin{equation*}
\boldsymbol{R}:=a(\nabla\Phi)^{-1}\boldsymbol{\Omega}\circ\Phi-\mathcal{R}(\nabla a\cdot[(\nabla\Phi)^{-1}\boldsymbol{\Omega}\circ\Phi]).
\end{equation*}
See Sec.~\ref{subsec-new-solutions} for details.
\end{itemize}

\subsection{Notations}
\begin{itemize}
\item The norms of $L^r(\mathbb{T}^d)$, $L^s(0,T;L^p(\mathbb{T}^d))$ will be denoted standardly as $\|\cdot\|_{r}$, $\|\cdot\|_{L^s_tL^p_x}$ when there is no confusion. The norm of $C^k([0,T]\times\mathbb{T}^d)$ will be denoted as $\|\cdot\|_{C^k_{t,x}}$.

\item The spacial mean of $f\in L^1(\mathbb{T}^d)$ is $\fint_{\mathbb{T}^d}f\,\mathrm{d}x=\int_{\mathbb{T}^d}f\,\mathrm{d}x$. $C_0^{\infty}(\mathbb{T}^d)$ denotes the space of smooth periodic functions with zero mean. $C^{\infty}(\sigma^{-1}\mathbb{T}^d)$ denotes the space of smooth $\sigma^{-1}\mathbb{T}^d$-periodic functions.

\item $a\lesssim b$ will be denoted as $a\le Cb$ with some inessential constant $C$ might depending on the old solution $(\rho,\boldsymbol{u},\boldsymbol{R})$ and the constant $N$ but never on $\eta,\delta$ given in Proposition~\ref{prn3.1}. If the constant depends on some quantities, for instance $r$, we use the notation $a\lesssim_r b$.

\item We use the notation $a^{-}:=\max\{0,-a\}$ for $a\in\mathbb{R}$.
\end{itemize}

\paragraph{Overview of the paper.} The reslt of the paper is ogranized as follows.  In Sec.\ref{sec-main-proposition}, we prove Theorem~\ref{thm1.1} by applying Proposition~\ref{prn3.1}. In Sec.~\ref{sec-technical-tools}, we  collect the technical tools for convex integration. In Sec.~\ref{sec-proof-of-main-proposition}, we prove Proposition~\ref{prn3.1}. In Sec.~\ref{sec-proof-of-theorem-2}, we sketch the proof of Theorem~\ref{thm1.2}.

\section{Main Proposition and Proof of Theorem~\ref{thm1.1}}\label{sec-main-proposition}

Theorem~\ref{thm1.1} follows immediately from the following theorem.
\begin{theorem}\label{thm1.3}
Under the assumptions in Theorem~\ref{thm1.1}, for any $\epsilon>0$ and  any time-periodic $\tilde\rho\in C^{\infty}([0,T]\times\mathbb{T}^d)$ with constant mean
\begin{equation*}
\fint_{\mathbb{T}^d}\tilde\rho(t,x)\,\mathrm{d}x=\fint_{\mathbb{T}^d}\tilde\rho(0,x)\,\mathrm{d}x\text{ for all }t\in[0,T],
\end{equation*}
there exists a divergence-free vector field $\boldsymbol{u}$ and a density $\rho$ such that the following holds.
\begin{enumerate}[(i).]
\item $\boldsymbol{u}\in L^{s'}(0,T;L^{p'}(\mathbb{T}^d))\cap L^{\tilde{s}}(0,T;W^{1,\tilde{p}}(\mathbb{T}^d))$ and $\rho\in L^s(0,T;L^{p}(\mathbb{T}^d))$.
\item $\rho(t)$ is continuous in the distributional sense and for $t=0,T$, $\rho(t)=\tilde\rho(t)$.
\item $(\rho,\boldsymbol{u})$ is a weak solution to \eqref{eq-TE} with initial data $\tilde\rho(0)$.
\item The deviation of $L^p$ norm is small on average: $\|\rho-\tilde\rho\|_{L^s_tL^p_x}\le\epsilon$.
\end{enumerate}
\end{theorem}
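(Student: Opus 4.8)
The plan is to obtain Theorem~\ref{thm1.3} as the end point of an iteration scheme built on a single inductive step, Proposition~\ref{prn3.1}, which is the convex integration proposition announced in the notation section. The object that gets iterated is a triple $(\rho_q,\boldsymbol{u}_q,\boldsymbol{R}_q)$ solving the \emph{continuity-defect equation}
\begin{equation*}
\partial_t\rho_q+\operatorname{div}(\rho_q\boldsymbol{u}_q)=\operatorname{div}\boldsymbol{R}_q,\qquad \operatorname{div}\boldsymbol{u}_q=0,
\end{equation*}
together with smallness bookkeeping: $\|\boldsymbol{R}_q\|_{L^1_{t,x}}\le\delta_{q+1}$, size control of the increments $\|\rho_{q+1}-\rho_q\|_{L^s_tL^p_x}$ and $\|\boldsymbol{u}_{q+1}-\boldsymbol{u}_q\|$ in both $L^{s'}_tL^{p'}_x$ and $L^{\tilde s}_tW^{1,\tilde p}_x$ by a summable sequence $\delta_q^{1/2}$ (times the concrete exponents dictated by \eqref{eq-assum}), and preservation of the prescribed boundary/periodicity data. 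First I would set up the base step: take $\rho_0=\tilde\rho$, $\boldsymbol{u}_0=0$, and $\boldsymbol{R}_0=\mathcal{R}\partial_t\tilde\rho$ using the antidivergence operator $\mathcal{R}$ of Sec.~\ref{sec-technical-tools}; since $\tilde\rho$ has constant-in-time mean, $\partial_t\tilde\rho$ has zero spatial mean, so $\boldsymbol{R}_0$ is well defined and smooth, and $\|\boldsymbol{R}_0\|_{L^1_{t,x}}$ is finite (a fixed constant we absorb into the first $\delta_1$).

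Next I would feed this through Proposition~\ref{prn3.1} repeatedly, choosing the parameters $\eta,\delta$ in the proposition to be $\delta_{q+1}$ and a fraction of $\epsilon$ at stage $q$, so that: (a) the $L^s_tL^p_x$ increments are summable with total sum $\le\epsilon$, giving (iv); (b) the $\boldsymbol{u}_q$ increments are summable in $L^{s'}_tL^{p'}_x\cap L^{\tilde s}_tW^{1,\tilde p}_x$ — this is exactly where the hypothesis $1\le\tilde s<s'$ and the arithmetic of \eqref{eq-assum} enter, since the perturbation carries two competing cost exponents (the $L^{p'}$-size of the building block versus its $W^{1,\tilde p}$-size, the latter also paying a temporal-intermittency price through $\tilde s'$), and \eqref{eq-assum} is precisely the inequality under which a common admissible window for the concentration parameters exists; (c) $\|\boldsymbol{R}_q\|_{L^1_{t,x}}\to0$. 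Then $\rho_q\to\rho$ in $L^s_tL^p_x$ and $\boldsymbol{u}_q\to\boldsymbol{u}$ in $L^{s'}_tL^{p'}_x\cap L^{\tilde s}_tW^{1,\tilde p}_x$, giving (i); passing to the limit in the continuity-defect equation and using $\boldsymbol{R}_q\to0$ in $L^1$ yields that $(\rho,\boldsymbol{u})$ solves the transport equation weakly, i.e. (iii) — here I would note $\operatorname{div}(\rho\boldsymbol{u})=\boldsymbol{u}\cdot\nabla\rho$ holds in distributions because $\operatorname{div}\boldsymbol{u}=0$ and $\rho\boldsymbol{u}\in L^1_{t,x}$ (the product is controlled by the H\"older pair $(s,s')$, $(p,p')$, which is consistent with the exponents in \eqref{eq-assum}). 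The temporal continuity of $\rho$ in the distributional sense and the matching $\rho(0)=\tilde\rho(0)$, $\rho(T)=\tilde\rho(T)$ in (ii) follow because Proposition~\ref{prn3.1} can be arranged to leave the perturbations supported away from $t=0,T$ (or, alternatively, to vanish there), which also makes the initial/terminal data pass to the limit; this should be recorded as part of the inductive hypothesis.

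Finally, to get genuine \emph{non-uniqueness} for Theorem~\ref{thm1.1} rather than just existence: apply Theorem~\ref{thm1.3} twice, once with $\tilde\rho\equiv\fint\tilde\rho(0)$ a constant (producing a field $\boldsymbol{u}$ and solution $\rho$ with $\rho(0)=$const but $\|\rho-\text{const}\|_{L^s_tL^p_x}$ small yet, by a separate lower-bound argument, not identically equal to the constant — or more robustly, once with $\tilde\rho$ constant and once with a genuinely non-constant $\tilde\rho$ having the same mean and the same endpoint data, then glue), so that two distinct weak solutions with the same initial datum and the same vector field coexist; the cleanest route, which I expect the paper takes, is to run the scheme so that the two constructions share the vector field. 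The main obstacle is unquestionably the inductive step Proposition~\ref{prn3.1} itself — in particular controlling $\boldsymbol{R}_{\mathrm{interact}}$ without the two spatial parameter groups $(\mu',\sigma')$, $(\mu'',\sigma'')$ of \cite{MS19}, which as the introduction notes are incompatible with \eqref{eq-assum}; the resolution is to put the gain into \emph{temporal} intermittency/oscillation with parameter $\lambda$ and to impose the $\lambda$-decay on the partition of unity $\{\zeta_i\}$ (Sec.~\ref{subsec-inverse-flow}), together with handling the defect $\operatorname{div}\boldsymbol{R}=a\operatorname{div}\boldsymbol{\Omega}\circ\Phi$ via the product identity and the representation $\boldsymbol{R}=a(\nabla\Phi)^{-1}\boldsymbol{\Omega}\circ\Phi-\mathcal{R}(\nabla a\cdot[(\nabla\Phi)^{-1}\boldsymbol{\Omega}\circ\Phi])$ flagged in Sec.~\ref{subsec-new-solutions}. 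At the level of this section, though, the only real work is the parameter bookkeeping verifying that the summability constraints (a)–(c) above are simultaneously satisfiable, which reduces to a finite system of strict inequalities whose feasibility is equivalent to \eqref{eq-assum}.
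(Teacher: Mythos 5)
Your overall scheme is the same as the paper's: start the iteration from $(\rho^1,\boldsymbol{u}^1,\boldsymbol{R}^1)=(\tilde\rho,0,\mathcal{R}\partial_t\tilde\rho)$ (using the constant-mean hypothesis so that $\partial_t\tilde\rho$ has zero spatial mean), apply Proposition~\ref{prn3.1} inductively with parameters $\eta_n,\delta_{n+1}$ balanced so that the density increments sum to at most $\epsilon$ while the velocity increments remain summable in $L^{s'}_tL^{p'}_x\cap L^{\tilde s}_tW^{1,\tilde p}_x$, and pass to the limit in the continuity--defect equation using $\rho^n\boldsymbol{u}^n\to\rho\boldsymbol{u}$ in $L^1_{t,x}$ and $\boldsymbol{R}^n\to0$. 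That part of your proposal is correct and matches the paper.

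There is, however, a genuine gap in your treatment of conclusion (ii). You claim that distributional-in-time continuity of $\rho$ and the endpoint identities $\rho(0)=\tilde\rho(0)$, $\rho(T)=\tilde\rho(T)$ follow because the perturbations are supported in $I_{r_n}=[r_n,T-r_n]$. This cannot suffice as stated: for a general $\tilde\rho$ the defect $\mathcal{R}\partial_t\tilde\rho$ does not vanish near $t=0,T$, so the perturbations must act arbitrarily close to the endpoints and the radii $r_n$ necessarily shrink to $0$; convergence in $L^s_tL^p_x$ alone then gives no control of $\rho$ at fixed times, so neither the continuity of $t\mapsto\int\rho\,\phi\,\mathrm{d}x$ nor the attainment of the endpoint data follows from the support property. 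The paper closes this by building an extra estimate into Proposition~\ref{prn3.1}, namely \eqref{eq-10}: $\bigl\vert\int_{\mathbb{T}^d}(\rho^{n+1}-\rho^n)(t,x)\phi(x)\,\mathrm{d}x\bigr\vert\le\delta_{n+1}\|\phi\|_{C^N}$ uniformly in $t$, whose summability yields $\int\rho^n\phi\,\mathrm{d}x\to\int\rho\,\phi\,\mathrm{d}x$ in $L^\infty_t$; combined with $\rho^n(0)=\tilde\rho(0)$, $\rho^n(T)=\tilde\rho(T)$ this gives (ii). Your list of inductive requirements (smallness of $\boldsymbol{R}$, norm control of the increments, support/periodicity) omits precisely this uniform-in-time weak estimate, so the iteration as you specified it does not deliver (ii); alternatively you could recover the continuity from the limit equation (since $\rho\boldsymbol{u}\in L^1_{t,x}$, the map $t\mapsto\int\rho\,\phi\,\mathrm{d}x$ has an absolutely continuous representative) and the endpoint data by passing to the limit in the weak formulation tested up to $t=0$ and $t=T$, but some such argument must be supplied. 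As a side remark, your closing paragraph on deducing Theorem~\ref{thm1.1} by applying Theorem~\ref{thm1.3} twice is extraneous to the present statement and differs from the paper, which applies Theorem~\ref{thm1.3} once to $\tilde\rho=\chi(t)\bar\rho$ and rules out constancy of $\|\rho(t)\|_{L^p}$ quantitatively; this does not affect the proof of Theorem~\ref{thm1.3} itself.
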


\noindent\textbf{Proof of Theorem~\ref{thm1.1}.} Let $\bar{\rho}\in C_0^{\infty}(\mathbb{T}^d)$ with $\|\bar{\rho}\|_{L^p_x}=1$. We take $\tilde\rho=\chi(t)\bar{\rho}(x)$ with $\chi\in C^{\infty}([0,T];[0,1])$ satisfying $\chi=1$ if $\vert  t-\frac{T}{2}\vert \le \frac{T}{4}$ and $\chi=0$ if $\vert  t-\frac{T}{2}\vert  \ge\frac{3T}{8}$. We apply Theorem~\ref{thm1.3} with $\epsilon=\frac{1}{4}(\frac{T}{4})^{1/s}$ and obtain $(\rho,\boldsymbol{u})$ solving \eqref{eq-TE} with $\rho(0,x)\equiv0$. By the choice of $\epsilon$, we claim that $\rho$ cannot have a constant $L^p_x$ norm and obviously $\rho\not\equiv 0$, which implies the non-uniqueness.

Indeed, assume $\|\rho(t)\|_{L^p}\equiv C$ for some $C>0$. On the one hand, due to $\|\rho-\tilde\rho\|_{L^s_tL^p_x}\le \frac{1}{4}(\frac{T}{4})^{1/s}$, we have
\begin{equation*}\frac{T}{2}\vert C-1\vert \le\int_{\frac{T}{4}}^{\frac{3T}{4}}\|\rho(t)-\tilde\rho(t)\|_{L^p_x}\,\mathrm{d}t\le\big(\frac{T}{2}\big)^{1/s'}\|\rho-\tilde\rho\|_{L^s_tL^p_x}\le\frac{T}{8}, \end{equation*}
hence $\vert C-1\vert \le 1/4$, which implies $C>3/4$.

On the other hand,
\begin{equation*}
\frac{T}{8}\ge\big(\frac{T}{4}\big)^{1/s'}\|\rho-\tilde\rho\|_{L^s_tL^p_x}\ge\int_{[0,\frac{T}{8}]\cup[\frac{7T}{8},T]}\|\rho(t)-\tilde\rho(t)\|_{L^p_x}\,\mathrm{d}t=\frac{T}{4}C,
\end{equation*}
hence $C\le 1/2$, in contradiction with $C>3/4$, we prove the claim. \qed

To prove Theorem~\ref{thm1.3}, we follow the frameworks of \cite{MS18,CL21} to obtain space-time periodic approximate solutions $(\rho,\boldsymbol{u},\boldsymbol{R})$ to the transport equation by solving the continuity-defect equation
\begin{equation}\label{eq-CDE}
\left\{
\begin{split}
&\partial_t\rho+\operatorname{div}(\rho\boldsymbol{u})=\operatorname{div}\boldsymbol{R},\\
&\operatorname{div} \boldsymbol{u}=0,
\end{split}\right.
\end{equation}
where $\boldsymbol{R}\colon[0,1]\times\mathbb{T}^d\to\mathbb{R}^d$ is called the defect field. 

For any $0<r<1$, denote $I_r:=[r,1-r]$. To build a iteration scheme for proving Theorem~\ref{thm1.3}, we will construct the small perturbations on $I_r\times\mathbb{T}^d$ of $(\rho,\boldsymbol{u})$ to obtain a new solution $(\rho^1,\boldsymbol{u}^1,\boldsymbol{R}^1)$ such that the new defect field $\boldsymbol{R}^1$ has small $L^1_{t,x}$ norm. This is the following main proposition of the paper.

\begin{proposition}\label{prn3.1}
Under the assumptions in Theorem~\ref{thm1.1}, there exist a universal constant $M>0$ and a large integer $N\in\mathbb{N}$ such that the following holds.

Suppose $(\rho,\boldsymbol{u},\boldsymbol{R})$ is a smooth solution of \eqref{eq-CDE} on $[0,1]$. Then for any $\delta,\eta>0$, there exists another smooth solution $(\rho^1,\boldsymbol{u}^1,\boldsymbol{R}^1)$ of \eqref{eq-CDE} on $[0,1]$ which fulfills the estimates
\begin{align*}
\|\rho^1-\rho\|_{L^s_tL^p_x}&\le\eta M\|\boldsymbol{R}\|^{1/p}_{L^1_{t,x}},\\
\|\boldsymbol{u}^1-\boldsymbol{u}\|_{L^{s'}_tL^{p'}_x}&\le\eta^{-1} M\|\boldsymbol{R}\|^{1/p'}_{L^1_{t,x}},\\
\|\boldsymbol{u}^1-\boldsymbol{u}\|_{L^{\tilde s}_tW^{1,\tilde{p}}_x}&\le\delta,\quad\|\boldsymbol{R}^1\|_{L^1_{t,x}}\le\delta.
\end{align*}
In addition, the density perturbation $\rho^1-\rho$ has zero spacial mean and satisfies
\begin{align}
\Big\vert  \int_{\mathbb{T}^d}(\rho^1-\rho)(t,x)\phi(x)\,\mathrm{d}x\Big\vert  &\le\delta\|\phi\|_{C^N}\quad\forall t\in[0,1],\,\forall\phi\in C^{\infty}(\mathbb{T}^d),\label{eq-10}\\
\operatorname{supp}_t(\rho^1-\rho)&\in I_r\,\text{ for some }r>0.\label{eq-11}
\end{align}
\end{proposition}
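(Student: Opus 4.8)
The plan is to follow the convex integration scheme of \cite{MS18,MS19,CL21,CL22}, adding one layer of perturbation that cancels the defect field $\boldsymbol{R}$ up to an error that can be made arbitrarily small in $L^1_{t,x}$. First I would fix large parameters: a spatial oscillation $\sigma$, a spatial concentration $\mu$, a temporal oscillation $\lambda$, and a temporal concentration $\kappa$, all tending to infinity along a single scaling $\sigma=\lambda^{\alpha}$, $\mu=\lambda^{\beta}$, $\kappa=\lambda^{\gamma}$ with exponents to be chosen at the very end so that every error term is small and the claimed norm bounds hold. The perturbation is of the form $\theta=\theta_p+\theta_c$ where $\theta_p$ is the principal part built from Mikado-type (or concentrated Dirichlet-kernel) density/field pairs $(\Theta_j,W_j)$ on $\mathbb{T}^d$, transported by the inverse flow map $\Phi$ of a regularized velocity, modulated by cutoffs $a_j$ chosen so that $\sum_j a_j^{p}\,\Theta_j\otimes W_j$ reproduces $-\boldsymbol{R}$ after averaging (this is the usual algebraic step using that $\boldsymbol{R}$ can be written as a combination of the building-block pairs on the support of a partition of unity). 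The velocity perturbation is $w=w_p+w_c$ built from the conjugate exponent profiles so that $\operatorname{div} w=0$ and $\theta_p w_p$ has the right low-frequency average; the corrector $\theta_c$ restores zero spatial mean and $w_c$ restores the divergence-free condition, both being lower order.

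Next I would organize the new defect field $\boldsymbol{R}^1=\mathcal{R}(\partial_t\theta+\operatorname{div}(\theta\boldsymbol{u}+\rho w+\text{correctors}))+ (\theta_p w_p-\boldsymbol{R}-\text{low freq})$, i.e.\ split $\operatorname{div}\boldsymbol{R}^1$ into $\boldsymbol{R}_{\rm lin}$ (terms linear in the building blocks: $\partial_t\theta_p$, $\operatorname{div}(\rho w_p)$, $\operatorname{div}(\theta_p\boldsymbol{u})$, plus corrector contributions), $\boldsymbol{R}_{\rm osc}$ (the high-frequency part of $\theta_p\otimes w_p$ left after subtracting its average), $\boldsymbol{R}_{\rm time}$ (coming from the temporal oscillation/concentration of the cutoffs), and $\boldsymbol{R}_{\rm corr}$ (everything involving $\theta_c$ or $w_c$). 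Each piece is handled by the anti-divergence operator $\mathcal{R}$ together with the stationary-phase / improved Hölder inequalities collected in Section~\ref{sec-technical-tools}; the key point flagged in the introduction is that for $\boldsymbol{R}_{\rm interact}$-type terms I do not use a second pair of spatial parameters $(\mu'',\sigma'')$ as in \cite{MS19} — instead the partition of unity $\{\zeta_i\}$ is required to decay in $\lambda$ (Section~\ref{subsec-inverse-flow}), so that the interaction error is absorbed by temporal gain. For linear terms of the shape $a\operatorname{div}\boldsymbol{\Omega}\circ\Phi$ I use the identity quoted in the introduction, $a\operatorname{div}\boldsymbol{\Omega}\circ\Phi=\operatorname{div}(a(\nabla\Phi)^{-1}\boldsymbol{\Omega}\circ\Phi)-\nabla a\cdot[(\nabla\Phi)^{-1}\boldsymbol{\Omega}\circ\Phi]$, taking $\boldsymbol{R}:=a(\nabla\Phi)^{-1}\boldsymbol{\Omega}\circ\Phi-\mathcal{R}(\nabla a\cdot[(\nabla\Phi)^{-1}\boldsymbol{\Omega}\circ\Phi])$, which avoids losing a factor from $\mathcal{R}$ acting on a full gradient and is what makes the $p=1$ case work where \cite{MS19} cannot be applied directly.

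Then I would collect the estimates. The bound $\|\rho^1-\rho\|_{L^s_tL^p_x}\lesssim \eta\|\boldsymbol{R}\|_{L^1_{t,x}}^{1/p}$ follows from $\|\theta_p\|_{L^s_tL^p_x}\lesssim \|a\|_{L^s_tL^p_x}$ times the normalized profile norm, using that the building blocks are $L^p$-normalized and that $\|a\|^p$ integrates to $\sim\|\boldsymbol{R}\|_{L^1_{t,x}}$; the factor $\eta$ is the free parameter splitting amplitude between $\theta$ and $w$, and dually $\|w_p\|_{L^{s'}_tL^{p'}_x}\lesssim \eta^{-1}\|\boldsymbol{R}\|_{L^1_{t,x}}^{1/p'}$. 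The bound $\|w\|_{L^{\tilde s}_tW^{1,\tilde p}_x}\le\delta$ needs the derivative to cost $\sigma\mu$ while the concentration gains are $\mu^{1-d}$-type in $L^{\tilde p}$ and the temporal concentration gives $\kappa^{1-1/\tilde s}$ smallness — here the quantitative hypothesis $\frac1p+\frac{\tilde s'}{s\tilde p}>1+\frac1{d-1}$ and $\tilde s<s'$ enter precisely to guarantee a nonempty window of admissible exponents $(\alpha,\beta,\gamma)$. The bound $\|\boldsymbol{R}^1\|_{L^1_{t,x}}\le\delta$ is the accumulation of all the error estimates above, each a negative power of $\lambda$. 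Finally \eqref{eq-10} follows by testing $\theta_p$ against a fixed $\phi\in C^\infty$: high spatial frequency $\sigma$ plus the $\mathcal{R}$-gain beat the $C^N$ norm, giving a negative power of $\sigma$ (hence of $\lambda$); zero spatial mean is built into $\theta_c$; and \eqref{eq-11} holds because all cutoffs are supported in $I_r$ by construction (multiply $a_j$ by a temporal cutoff equal to $1$ on $I_{2r}$ and vanishing outside $I_r$, using that $\boldsymbol{R}$ itself may be assumed compactly supported in time after the first iteration, or absorbing the boundary layer into $\delta$). The main obstacle is the simultaneous balancing of the four parameters: one must check that the temporal-intermittency gain is strong enough to kill the interaction error $\boldsymbol{R}_{\rm interact}$ without a second spatial scale while the spatial-intermittency exponent still respects $\frac1p+\frac{\tilde s'}{s\tilde p}>1+\frac1{d-1}$ and the Sobolev bound on $w$ — i.e.\ verifying that the system of inequalities on $(\alpha,\beta,\gamma)$ is feasible exactly under \eqref{eq-assum} and $\tilde s<s'$, and carrying the $p=1$ endpoint through using the divergence identity above rather than an $\mathcal{R}$-estimate.
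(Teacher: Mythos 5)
Your plan follows the paper's route almost verbatim (Mikado blocks composed with short-time inverse flow maps $\Phi_i$, amplitudes $a_j,b_j$ cut off in time so that \eqref{eq-11} and the $\boldsymbol{R}_{\rm rem}$-type boundary layer are absorbed into $\delta$, the $\lambda$-decaying partition of unity for the interaction term, the divergence identity for the $\boldsymbol{\Omega}\circ\Phi$ terms at $p=1$, and the final feasibility check for the exponents under \eqref{eq-assum} and $\tilde s<s'$). However, there is one genuine gap: your density ansatz is only $\theta=\theta_p+\theta_c$, with no temporal oscillation corrector. In a time-intermittent scheme the product $\tilde g_j^{\kappa,\lambda}\bar g_j^{\kappa,\lambda}$ only has time average $1$; after the spatial average of $\theta_p\boldsymbol{w}_p$ cancels $\chi_j^2R_j$, you are left with the term $\sum_j(\tilde g_j^{\kappa,\lambda}\bar g_j^{\kappa,\lambda}-1)\chi_j^2R_j\,\boldsymbol{e}_j$, which has \emph{no spatial oscillation}: the anti-divergence $\mathcal{R}$ gains nothing on it, and its $L^1_{t,x}$ norm is comparable to $\|\chi^2\boldsymbol{R}\|_{L^1_{t,x}}$, not small. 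Your proposed categories ``$\boldsymbol{R}_{\rm osc}$ handled by $\mathcal{R}$ plus stationary phase'' and ``$\boldsymbol{R}_{\rm time}$ from the cutoffs'' do not address it: one cannot integrate by parts in time inside the defect field without changing the ansatz, because $\boldsymbol{R}^1$ must satisfy $\operatorname{div}\boldsymbol{R}^1=\partial_t\theta+\dots$ pointwise in $t$.

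The paper resolves this precisely by adding the corrector $\theta_o=\sum_j\lambda^{-1}h_j^{\kappa,\lambda}\partial_j(\chi_j^2R_j)$ built from the primitive $h_j^{\kappa,\lambda}$ of $\bar g_j\tilde g_j-1$ (see \eqref{eq-temporal-equality}); then $\partial_t\theta_o$ cancels the problematic term exactly, leaving only $\boldsymbol{R}_{{\rm osc},t}=\lambda^{-1}\sum_j h_j^{\kappa,\lambda}\partial_t(\chi_j^2R_j)\boldsymbol{e}_j$, which is $O(\lambda^{-1})$, plus a harmless $O(\lambda^{-1})$ contribution of $\theta_o$ to $\boldsymbol{R}_{\rm lin}$ and $\boldsymbol{R}_{\rm cor}$ and to \eqref{eq-10}. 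Without this corrector (or an equivalent device) the estimate $\|\boldsymbol{R}^1\|_{L^1_{t,x}}\le\delta$ fails, so you should add $\theta_o$ to the perturbation and record its (easy) estimates; with that insertion your outline matches the paper's proof.
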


Now we prove Theorem~\ref{thm1.3} by assuming Proposition~\ref{prn3.1}. We adopt the argument in \cite{MS20} which allows to deal with all $p\in[1,\infty)$.

\noindent\textbf{Proof of Theorem~\ref{thm1.3}.} Without loss of generality, we assume $T=1$. We will construct a sequence $(\rho^n,\boldsymbol{u}^n,\boldsymbol{R}^n)$ of solutions to \eqref{eq-CDE}. For $n=1$, we set
\begin{equation*}(\rho^1,\boldsymbol{u}^1,\boldsymbol{R}^1):=(\tilde\rho,0,\mathcal{R}(\partial_t\tilde\rho)),\end{equation*}
where $\mathcal{R}$ is the anti-divergence in the next section. Notice the constant mean assumption on $\tilde\rho$ implies zero mean of $\partial_t\tilde\rho$, hence $(\rho^1,\boldsymbol{u}^1,\boldsymbol{R}^1)$ solves \eqref{eq-CDE}.

Next we apply Proposition~\ref{prn3.1} inductively to obtain $(\rho^n,\boldsymbol{u}^n,\boldsymbol{R}^n)$ for $n=2,3\cdots$ as follows. Set $\eta_1:=\epsilon(2M\|\boldsymbol{R}^1\|^{1/p}_{L^1_{t,x}})^{-1}$ and choose sequence $\{(\delta_n,\eta_n)\}_{n=2}^{\infty}\subset(0,\infty)^2$ such that $\sum_{n}\delta_n^{1/2}=1$, $\delta_n^{1/p}\eta_n=\epsilon\delta^{1/2}_n/2M$. Observe that $\delta_n^{1/p'}/\eta_n=2M\delta^{1/2}_n/\epsilon$.

Given $(\rho^n,\boldsymbol{u}^n,\boldsymbol{R}^n)$, we apply Proposition~\ref{prn3.1} with parameters $\eta=\eta_n$ and $\delta=\delta_{n+1}$ to obtain a new triple $(\rho^{n+1},\boldsymbol{u}^{n+1},\boldsymbol{R}^{n+1})$ which verifies
\begin{align*}
\|\rho^{n+1}-\rho^n\|_{L^s_tL^p_x}&\le\eta_{n} M\|\boldsymbol{R}^n\|^{1/p}_{L^1_{t,x}},\\
\|\boldsymbol{u}^{n+1}-\boldsymbol{u}^n\|_{L^{s'}_tL^{p'}_x}&\le\eta_{n}^{-1} M\|\boldsymbol{R}^n\|^{1/p'}_{L^1_{t,x}},\\
\|\boldsymbol{u}^{n+1}-\boldsymbol{u}^n\|_{L^{\tilde s}_tW^{1,\tilde{p}}_x}&\le\delta_{n+1},\quad\|\boldsymbol{R}^{n+1}\|_{L^1_{t,x}}\le\delta_{n+1},\\
\Big\vert \int_{\mathbb{T}^d}(\rho^{n+1}-\rho^n)(t,x)\phi(x)\,\mathrm{d}x\Big\vert &\le\delta_{n+1}\|\phi\|_{C^N}\quad\forall t\in[0,1],\,\forall\phi\in C^{\infty}(\mathbb{T}^d).
\end{align*}
When $n\ge 2$, we have
\begin{align*}
\|\rho^{n+1}-\rho^n\|_{L^s_tL^p_x}&\le\frac{\epsilon\delta^{1/2}_n}{2},\\
\|\boldsymbol{u}^{n+1}-\boldsymbol{u}^n\|_{L^{s'}_tL^{p'}_x}&\le\frac{2M^2\delta^{1/2}_n}{\epsilon}.
\end{align*}
Clearly there are functions $\rho\in L^s_tL^p_x$ and $\boldsymbol{u}\in L^{s'}_tL^{p'}_x\cap L^{\tilde{s}}_tW^{1,\tilde{p}}_x$ such that $\rho^{n}\to\rho$ in $L^s_tL^p_x$ and $\boldsymbol{u}^n\to\boldsymbol{u}$ in $L^{s'}_tL^{p'}_x\cap L^{\tilde{s}}_tW^{1,\tilde{p}}_x$. Moreover, we have $\rho^{n}\boldsymbol{u}^n\to\rho\boldsymbol{u}$ and $\boldsymbol{R}^n\to 0$ in $L^1_{t,x}$, and  $\int_{\mathbb{T}^d}\rho^n(\cdot,x)\phi(x)\,\mathrm{d}x\to\int_{\mathbb{T}^d}\rho(\cdot,x)\phi(x)\,\mathrm{d}x$ in $L^{\infty}_t$. Combine the fact $\operatorname{supp}_t(\rho^{n+1}-\rho^n)\in I_{r_n}$ for some $r_n>0$, we obtain the temporal continuity of $\rho$ in the distributional sense and for $t=0,1$, $\rho(t)=\tilde\rho(t)$, furthermore $(\rho,\boldsymbol{u})$ is a weak solution to \eqref{eq-TE} with initial data $\tilde\rho(0)$.

Finally, thanks to the choice of $\{\delta_n\},\{\eta_n\}$, we have
\begin{equation*}
\|\rho-\tilde\rho\|_{L^s_tL^p_x}\le\|\rho^{2}-\rho^1\|_{L^s_tL^p_x}+\sum_{n=2}^{\infty}\|\rho^{n+1}-\rho^n\|_{L^s_tL^p_x}\le\frac{\epsilon}{2}+\frac{\epsilon}{2}\sum_{n=2}^{\infty}\delta^{1/2}_n=\epsilon.
\end{equation*}\qed

%
%


\section{Technical tools}\label{sec-technical-tools}

In this section, we collect the technical tools well prepared in \cite{MS19,CL21,CL22}. To keep it concise, many thought-provoking details have been condensed, interested readers can refer to \cite{MS18,MS19,MS20,CL21,CL22}.

\subsection{Diffeomorphisms of the flat torus}\label{subsec-inverse-flow}

To handle the case $p=1$, we employ the inverse flow map associated to $\boldsymbol{u}$ introduced in \cite{MS19}, which originates from the framework of the Euler equation in \cite{BDIS15}.

We say that $\Phi\colon\mathbb{T}^d\to\mathbb{T}^d$ is a diffeomorphism of $\mathbb{T}^d$, if $\Phi(x+k)=\Phi(x)+k$ for all $k\in\mathbb{Z}^d$. The diffeomorphism $\Phi$ is measure-preserving if $\vert\operatorname{det}\nabla\Phi(x)\vert=1$ for all $x\in\mathbb{T}^d$. Note $\nabla\Phi(x)\in\mathbb{R}^{d\times d}$ is an invertible matrix, its inverse matrix denotes as $(\nabla\Phi)^{-1}$. For every $m\in\mathbb{N}$, we have
\begin{align}
&\|\nabla^k((\nabla\Phi)^{-1})\|_{C^0(\mathbb{T}^d)}\lesssim_{k}\|\nabla\Phi\|_{C^k(\mathbb{T}^d)}^{d-1},\\
&\|g\circ\Phi\|_{p}=\|g\|_{p},\quad\|g\circ\Phi\|_{W^{k,p}}\lesssim_{k}\|\nabla\Phi\|_{C^{k-1}}^k\|g\|_{W^{k,p}}.\label{eq-3.2}
\end{align}
Let $G\in C^{\infty}(\mathbb{T}^d;\mathbb{R}^d)$, then
\begin{equation}\label{eq-div-inverse-map}
\operatorname{div}[(\nabla\Phi)^{-1}G\circ\Phi]=(\operatorname{div}G)\circ\Phi.
\end{equation}

Now we define the measure-preserving diffeomorphism $\Phi_i$ associated to $\boldsymbol{u}$ in small intervals such that $(\nabla\Phi_i)^{-1}$ is close to the identity matrix $\operatorname{Id}$. Set $D:=1/\nu\in\mathbb{N}$, for every $i=1\dots N$, let $I_i:=[i\nu,(i+1)\nu]$ and let $t_i:=(i+1/2)\nu$. Consider  a partition of unity $\{\zeta_i\}$ subordinate to the family of intervals $\{I_i\}$. For every $i=1\dots N$, let $\Phi_i\colon[0,1]\times\mathbb{T}^d\to\mathbb{T}^d$ be the solution to
\begin{equation*}
\partial_t\Phi_i+\boldsymbol{u}\cdot\nabla \Phi_i=0,\quad\Phi_i\vert_{t=t_i}=x.
\end{equation*}
Since $(\nabla\Phi_i(t_i))^{-1}=\operatorname{Id}$, when $\nu$ is small enough, we have
\begin{equation}\label{eq-3.5}
\|(\nabla\Phi_i(t_i))^{-1}-\operatorname{Id}\|_{C^0([i\nu,(i+1)\nu]\times\mathbb{T}^d)}=O(\nu).
\end{equation}
Moreover, we let
\begin{equation}
\operatorname{meas}\{\operatorname{supp}(\zeta_i\zeta_{i+1})\}=O(\lambda^{-1/2}),\quad\|\dot{\zeta}_i\|_{C^0([0,1])}\lesssim\lambda^{1/2},\label{eq-3.6}
\end{equation}
where $\lambda$ is the temporal oscillation parameter determined later.

\subsection{Anti-divergence operators and common inequalities}

It is well known that for any $f\in C^{\infty}(\mathbb{T}^d)$, there exists a unique solution in $C_0^{\infty}(\mathbb{T}^d)$ of the Poisson equation
\begin{equation*}
\Delta u=f-\fint_{\mathbb{T}^d} f\,\mathrm{d}x.
\end{equation*}
Hence the standard anti-divergence operator $\mathcal{R}\colon C^{\infty}(\mathbb{T}^d)\to C^{\infty}_0(\mathbb{T}^d;\mathbb{R}^d)$ can be well defined as
\begin{equation*}
\mathcal{R}f:=\Delta^{-1}\nabla  f,
\end{equation*}
which satisfies
\begin{equation}
\operatorname{div}(\mathcal{R}f)=f-\fint_{\mathbb{T}^d} f\,\mathrm{d}x.
\end{equation}
For every $m\in\mathbb{N}$, $r\in[1,\infty]$, $\mathcal{R}$ is bounded on Sobolev spaces $W^{m,r}(\mathbb{T}^d)$: 
\begin{equation}\label{eq-Wkp-bound-anti}
\|\mathcal{R}f\|_{W^{m,r}}\lesssim_{m,r}\|f\|_{W^{m,r}}.
\end{equation}

In \cite{MS19}, the authors introduced an improved anti-divergence operator involving a diffeomorphism. Let $f\in C^{\infty}(\mathbb{T}^d)$, $g\in C_0^{\infty}(\mathbb{T}^d)$, $\sigma\in\mathbb{N}$ and $\Phi\in C^{\infty}(\mathbb{T}^d;\mathbb{T}^d)$ be a measure-preserving diffeomorphism. Then there exists a vector field $u\in C^{\infty}(\mathbb{T}^d)$ written as $u:=\mathcal{R}[f(\cdot)g(\sigma\Phi(\cdot))]$ such that
\begin{equation*}
\operatorname{div}u=f(\cdot)g(\sigma\Phi(\cdot))-\fint_{\mathbb{T}^d}  f(x)g(\sigma\Phi(x))\,\mathrm{d}x
\end{equation*}
and for every $m\in\mathbb{N}$, $r\in[1,\infty]$, there holds (see \cite{MS19} for the proof)
\begin{equation}\label{eq-Wkp-bound-anti-with-inverse-map}
\|\mathcal{R}[f(\cdot)g(\sigma\Phi(\cdot))]\|_{W^{m,r}(\mathbb{T}^d)}\lesssim_{m,r}\sigma^{m-1}\|f\|_{C^{m+1}(\mathbb{T}^d)}\|\nabla\Phi\|_{C^{m}(\mathbb{T}^d)}^{d-1+m}\|g\|_{W^{m,r}(\mathbb{T}^d)}.
\end{equation}


The following two inequalities will be frequently used, the proofs can be referred to \cite{MS19}. Assume $d\ge2$, $m,\sigma\in\mathbb{N}$, $r\in[1,\infty]$, $q\in(1,\infty)$, $n\ge 0$ is an even integer, $a,f\in C^{\infty}(\mathbb{T}^d)$, $g\in C_0^{\infty}(\mathbb{T}^d)$and $\Phi\in C^{\infty}(\mathbb{T}^d;\mathbb{T}^d)$ be a measure-preserving diffeomorphism, then there hold
\begin{align}
\text{Riemann-Lebesgue type inequality}\qquad&\Big\vert \fint_{\mathbb{T}^d}a(x)g(\sigma\Phi(x))\,\mathrm{d}x\Big\vert  \lesssim_n \sigma^{-n}\|a\|_{C^n}\|\nabla\Phi\|_{C^{n-1}}^{d-1}\|g\|_{2},\label{eq-Riemann-Lebesgue}\\
\text{Improved H\"older's inequality}\qquad&\|a(\cdot)f(\sigma\Phi(\cdot))\|_{r}\lesssim_{r}\Big(\|a\|_{r}+\sigma^{-\frac{1}{r}}\|a\|_{C^1}\|\nabla\Phi\|_{C^{0}}^{d-1}\Big)\|f\|_{r}.\label{eq-Improved-Holder}
\end{align}
In particular, if $\Phi=x$, we have the classical forms of these inequalities
\begin{align}
\text{Riemann-Lebesgue type inequality}\qquad&\Big\vert \fint_{\mathbb{T}^d}a(x)g(\sigma x)\,\mathrm{d}x\Big\vert  \lesssim_n \sigma^{-n}\|a\|_{C^n}\|g\|_{2},\label{eq-Riemann-Lebesgue-classical}\\
\text{Improved H\"older's inequality}\qquad&\|a(\cdot)f(\sigma\cdot)\|_{r}\lesssim_{r}\Big(\|a\|_{r}+\sigma^{-\frac{1}{r}}\|a\|_{C^1}\Big)\|f\|_{r}.\label{eq-Improved-Holder-classical}
\end{align}

\subsection{Building blocks}

We recall the building blocks for the convex integration construction introduced in \cite{CL21,CL22}, which originates from \cite{DS17,MS18,BV19b}. 

The spatial part consists in the Mikado densities $\Theta_j^{\mu,\sigma}$, fields $\boldsymbol{W}_j^{\mu,\sigma}=W_j^{\mu,\sigma}\boldsymbol{e}_j$ and potentials $\boldsymbol{\Omega}_j^{\mu,\sigma}$, $j=1\dots d$ satisfying: for all $\mu\ge 1$, $\sigma,m\in\mathbb{N}$, $r\in[1,\infty]$, then $\Theta_j^{\mu,\sigma},W_j^{\mu,\sigma}\in C_0^{\infty}(\sigma^{-1}\mathbb{T}^d)$, $\boldsymbol{\Omega}_j^{\mu,\sigma}\in C^{\infty}(\sigma^{-1}\mathbb{T}^d;\mathbb{R}^d)$ and
\begin{align}
&\operatorname{div}(\Theta_j^{\mu,\sigma}\boldsymbol{W}_j^{\mu,\sigma})=0,\quad\operatorname{div} \boldsymbol{W}_j^{\mu,\sigma}=0,\quad\operatorname{div}\boldsymbol{\Omega}_j^{\mu,\sigma}=\sigma\Theta_j^{\mu,\sigma},\label{eq-7}\\
&\int_{\mathbb{T}^d}\Theta_j^{\mu,\sigma}(x)W_j^{\mu,\sigma}(x)\,\mathrm{d}x=1\text{ for all } 1\le j\le d,\label{eq-8}\\
&\|\nabla ^m\Theta_j^{\mu,\sigma}\|_{r}\lesssim_{m,r}\sigma^m\mu^{m+\frac{d-1}{p}-\frac{d-1}{r}},\label{eq-Mikado-density-estimate}\\
&\|\nabla ^m W_j^{\mu,\sigma}\|_{r}\lesssim_{m,r}\sigma^m\mu^{m+\frac{d-1}{p'}-\frac{d-1}{r}},\label{eq-Mikado-field-estimate}\\
&\|\nabla ^m\boldsymbol{\Omega}_j^{\mu,\sigma}\|_{r}\lesssim_{m,r}\sigma^m\mu^{m-1+\frac{d-1}{p}-\frac{d-1}{r}}.\label{eq-Mikado-potential-estimate}
\end{align}

The temporal part consists in the temporal functions $\bar{g}_j^{\kappa,\lambda},\tilde{g}_j^{\kappa,\lambda}$ and temporal correction function $h_j^{\kappa,\lambda}$ satisfying: for all $\kappa\ge 1$, $\lambda,m\in\mathbb{N}$, $r\in[1,\infty]$, then $\bar{g}_j^{\kappa,\lambda},\tilde{g}_j^{\kappa,\lambda}\in C_0^{\infty}(\lambda^{-1}\mathbb{T})$, $h_j^{\kappa,\lambda}\in C^{\infty}(\lambda^{-1}\mathbb{T})$ and
\begin{align}
&\int_0^1\bar{g}_j^{\kappa,\lambda}(t)\tilde{g}_j^{\kappa,\lambda}(t)\,\mathrm{d}t=1,\quad h_j^{\kappa,\lambda}(t):=\lambda\int_0^t(\bar{g}_j^{\kappa,\lambda}\tilde{g}_j^{\kappa,\lambda}-1)\,\mathrm{d}\tau,\label{eq-temporal-equality}\\
&\|\partial_t^m\bar{g}_j^{\kappa,\lambda}\|_{r}\lesssim \lambda^m\kappa^{m+\frac{1}{s'}-\frac{1}{r}},\quad\|\partial_t^m\tilde{g}_j^{\kappa,\lambda}\|_{r}\lesssim \lambda^m\kappa^{m+\frac{1}{s}-\frac{1}{r}},\quad\|h_j^{\kappa,\lambda}\|_{\infty}\le 1.\label{eq-temporal-intermittency-estimate}
\end{align}

\subsection{Smooth cutoff of the defect fields}\label{subsec-smooth-cutoff}

We write $\boldsymbol{R}=\sum_j R_j\boldsymbol{e}_j$, where $\boldsymbol{e}_j$ is the $j$-th standard Euclidean basis.

Recall the notation $I_r=[r,1-r]\subset(0,1)$ for $0<r<1$. Define the smooth cutoff functions $\chi_{j}\in C_c^{\infty}(\mathbb{R}\times\mathbb{T}^d)$ satisfying
\begin{equation}\label{eq-14}
0\le\chi_{j}\le1,\quad\chi_{j}(t,x)=\left\{
\begin{split}
0,&\text{ if }\vert R_j\vert \le \frac{\delta}{16d}\text{ or }t\notin I_{r/2},\\
1,&\text{ if }\vert R_j\vert \ge \frac{\delta}{8d}\text{ and }t\in I_{r}.
\end{split}\right.
\end{equation}
Where $r>0$ is fixed sufficiently small enough such that 
\begin{equation}\label{eq-15}
\|\boldsymbol{R}\|_{L^{\infty}_{t,x}}\le\frac{\delta}{16rd}.
\end{equation}
Notice $\operatorname{supp}\chi_{j}\subset I_{r/2}\subset(0,1)$. By a slight abuse of notation, $\chi_{j}$ denote the 1-periodic extension in time of $\chi_{j}$. Define $\widetilde{R}_j:=\chi_{j}R_j$ and
\begin{equation*}
a_j(t,x):=\Big(\frac{\|\widetilde{R}_j(t)\|_{L^1}}{\|\widetilde{R}_j\|_{L^{1}_{t,x}}}\Big)^{\frac{1}{s}-\frac{1}{p}}\operatorname{sign}(-R_j)\chi_j\vert R_j\vert ^{\frac{1}{p}},\quad b_j(t,x):=\Big(\frac{\|\widetilde{R}_j(t)\|_{L^1}}{\|\widetilde{R}_j\|_{L^{1}_{t,x}}}\Big)^{\frac{1}{p}-\frac{1}{s}}\chi_j\vert R_j\vert ^{\frac{1}{p'}}.
\end{equation*}
$a_j,b_j$ have the following properties proved in \cite[Lem.7.1]{CL21}. 
\begin{align}
&a_jb_j=-\chi_j^2R_j,\quad a_j,b_j\in C^{\infty}([0,1]\times\mathbb{T}^d),\quad \|\widetilde{R}_j(t)\|_{L^1}\in C^{\infty}([0,1]),\label{eq-ajbj-equality}\\
&\|a_j(t)\|_{L^p_x}\le\|\widetilde{R}_j\|_{L^{1}_{t,x}}^{\frac{1}{p}-\frac{1}{s}}\|\widetilde{R}_j(t)\|_{L^1_x}^{\frac{1}{s}},\quad\|b_j(t)\|_{L^{p'}}\le\|\widetilde{R}_j\|_{L^{1}_{t,x}}^{\frac{1}{s}-\frac{1}{p}}\|\widetilde{R}_j(t)\|_{L^1_x}^{\frac{1}{s'}},\label{eq-Lp-bound-ajbj}\\
&\|a_j\|_{C^k_{t,x}}\le C,\quad\|b_j\|_{C^k_{t,x}}\le C.\label{eq-Ck-bound-ajbj}
\end{align}

\section{Proof of Proposition~\ref{prn3.1}}\label{sec-proof-of-main-proposition}

\subsection{Construction of new solutions}\label{subsec-new-solutions}

Now we define the perturbation of density as  $\theta=\theta_p+\theta_c+\theta_o$ and the perturbation of vector field as $\boldsymbol{w}=\boldsymbol{w}_p+\boldsymbol{w}_c$ where
\begin{align*}
\theta_p(t,x)&:=\sum\nolimits_{i,j}\eta \tilde{g}_j^{\kappa,\lambda}(t)a_j(t,x)\zeta_i(t)\Theta_j^{\mu,\sigma}\circ\Phi_{i}\\
\boldsymbol{w}_p(t,x)&:=\sum\nolimits_{i,j}\eta^{-1}\bar{g}_j^{\kappa,\lambda}(t)b_j(t,x)\zeta_i(t)[(\nabla \Phi_{i})^{-1}\boldsymbol{e}_j]W_j^{\mu,\sigma}\circ\Phi_{i}\\
\theta_c(t)&:=-\fint_{\mathbb{T}^d}\theta_p(t,x)\,\mathrm{d}x,\\
\boldsymbol{w}_c(t,x)&:=-\sum\nolimits_{i,j}\eta^{-1}\bar{g}_j^{\kappa,\lambda}\zeta_{i}\mathcal{R}\{\nabla b_j\cdot[(\nabla \Phi_{i})^{-1}\boldsymbol{e}_j]W_j^{\mu,\sigma}\circ\Phi_{i}\},\\
\theta_o(t,x)&:=\sum\nolimits_{j}\lambda^{-1} h_j^{\kappa,\lambda}(t)\partial_j(\chi_j^2R_j).
\end{align*}
It is easy to check that $\theta$ has zero mean and $\boldsymbol{w}$ is divergence free. Then $\rho^1,\boldsymbol{u}^1$ can be constructed as
\begin{align*}
&\rho^1:=\rho+\theta,\quad \boldsymbol{u}^1:=\boldsymbol{u}+\boldsymbol{w}.
\end{align*}

The next step is to construct $\boldsymbol{R}^1$ such that $(\rho^1,\boldsymbol{u}^1,\boldsymbol{R}^1)$ satisfying the continuity-defect equation
\begin{equation*}
\partial_t\rho^1+\operatorname{div}(\rho^1\boldsymbol{u}^1)=\operatorname{div} \boldsymbol{R}^1.
\end{equation*}
Insert $\partial_t\rho+\operatorname{div}(\rho\boldsymbol{u})=\operatorname{div} \boldsymbol{R}$ into this new continuity-defect equation, we obtain
\begin{equation*}
\partial_t\theta+\operatorname{div}(\rho\boldsymbol{w}+\theta\boldsymbol{u}+\theta\boldsymbol{w}+\boldsymbol{R})=\operatorname{div} \boldsymbol{R}^1.
\end{equation*}
Then we split $\boldsymbol{R}^1$ as follows:
\begin{align*}
\boldsymbol{R}^1:=\boldsymbol{R}_{\rm lin}+\boldsymbol{R}_{\rm cor}+\boldsymbol{R}_{\rm trans}+\boldsymbol{R}_{\rm osc},
\end{align*}
where
\begin{align*}
\boldsymbol{R}_{\rm lin}&:=\theta_0\boldsymbol{u}+\rho \boldsymbol{w},\quad\boldsymbol{R}_{\rm cor}:=\theta \boldsymbol{w}_c+(\theta_o+\theta_c)\boldsymbol{w}_p,\\
\boldsymbol{R}_{\rm trans}&:=\boldsymbol{R}_{\rm trans,1}+\boldsymbol{R}_{\rm trans,2},\\
\boldsymbol{R}_{\rm trans,1}&:=\sigma^{-1}\eta\sum\nolimits_{i,j}\Big\{\partial_t(\tilde{g}_j^{\kappa,\lambda}a_j)\zeta_{i}(\nabla \Phi_{i})^{-1}\boldsymbol{\Omega}_j^{\mu,\sigma}\circ\Phi_{i}\\
&\qquad\qquad-\mathcal{R}[\partial_t(\tilde{g}_j^{\kappa,\lambda}\nabla a_j)\cdot(\zeta_{i}(\nabla \Phi_{i})^{-1}\boldsymbol{\Omega}_j^{\mu,\sigma}\circ\Phi_{i})]\Big\},\\
\boldsymbol{R}_{\rm trans,2}&:=\eta\sum\nolimits_{i,j}\mathcal{R}\Big\{\tilde{g}_j^{\kappa,\lambda}[a_j\dot{\zeta}_{i}+(\boldsymbol{u}\cdot\nabla a_j)\zeta_{i}]\Theta_j^{\mu,\sigma}\circ\Phi_{i}\Big\},\\
\boldsymbol{R}_{{\rm osc}}&:=\boldsymbol{R}_{{\rm osc},x}+\boldsymbol{R}_{{\rm osc},t}+\boldsymbol{R}_{{\rm rem}}+\boldsymbol{R}_{\rm flow}+\boldsymbol{R}_{\rm interact},\\
\boldsymbol{R}_{{\rm osc},x}&:=-\sum\nolimits_{i,j}\tilde{g}_j^{\kappa,\lambda}\bar{g}_j^{\kappa,\lambda}\zeta_{i}^2\mathcal{R}\Big\{\nabla(R_j\chi_j^2)\cdot[(\nabla \Phi_{i})^{-1}\boldsymbol{e}_j][(\Theta_j^{\mu,\sigma}W_j^{\mu,\sigma})\circ\Phi_{i}-1]\Big\},\\
\boldsymbol{R}_{{\rm osc},t}&:=\lambda^{-1}\sum\nolimits_{j}h_j^{\kappa,\lambda}(t)\partial_t(\chi_j^2R_j)\boldsymbol{e}_j,\quad\boldsymbol{R}_{\rm rem}:=\sum\nolimits_{j}(1-\chi_j^2)R_j\boldsymbol{e}_j,\\
\boldsymbol{R}_{\rm flow}&:=\sum\nolimits_{i,j}\tilde{g}_j^{\kappa,\lambda}\bar{g}_j^{\kappa,\lambda}\chi_j^2\zeta_{i}^2[\operatorname{Id}-(\nabla\Phi_{i})^{-1}]R_j\boldsymbol{e}_j,\\
\boldsymbol{R}_{\rm interact}&:=-\sum\nolimits_{j}\tilde{g}_j^{\kappa,\lambda}\bar{g}_j^{\kappa,\lambda}R_j\chi_j^2\sum_{i=1}^{D-1}\zeta_{i}\zeta_{i+1}\Big\{[(\nabla \Phi_{i})^{-1}\boldsymbol{e}_j](W_j^{\mu,\sigma}\circ\Phi_{i})(\Theta_j^{\mu,\sigma}\circ\Phi_{i+1})\\
&\qquad\qquad\qquad\qquad\qquad\qquad+[(\nabla \Phi_{i+1})^{-1}\boldsymbol{e}_j](W_j^{\mu,\sigma}\circ\Phi_{i+1})(\Theta_j^{\mu,\sigma}\circ\Phi_{i})\Big\}.
\end{align*}
It can be proved by the properties of the building blocks (\ref{eq-7},\ref{eq-8},\ref{eq-temporal-equality},\ref{eq-ajbj-equality}) that
\begin{align*}
&\operatorname{div}\boldsymbol{R}_{\rm lin}=\operatorname{div}(\theta_0\boldsymbol{u}+\rho\boldsymbol{w}),\quad\operatorname{div}\boldsymbol{R}_{\rm cor}=\operatorname{div}(\theta\boldsymbol{w}_c+\theta_{o}\boldsymbol{w}_p+\theta_c\boldsymbol{w}_p),\\
&\operatorname{div}\boldsymbol{R}_{\rm trans}=\partial_t(\theta_p+\theta_c)+\operatorname{div}(\theta_p\boldsymbol{u}+\theta_c\boldsymbol{u}),\quad\operatorname{div}\boldsymbol{R}_{\rm osc}=\partial_t\theta_{o}+\operatorname{div}(\theta_p\boldsymbol{w}_p+\boldsymbol{R}).
\end{align*}


Indeed, the construction of $\boldsymbol{R}_{\rm trans}$ is based on the following facts: recall \eqref{eq-div-inverse-map} and \eqref{eq-7}, we have $\operatorname{div}[(\nabla\Phi_i)^{-1}\boldsymbol{\Omega}_j^{\mu,\sigma}\circ\Phi_i]=\sigma\Theta_j^{\mu,\sigma}\circ\Phi_i$, hence
\begin{align*}
&\partial_t\theta_p+\operatorname{div}(\theta_p\boldsymbol{u}+\theta_c\boldsymbol{u})\\
&=\partial_t\theta_p+\boldsymbol{u}\cdot\nabla \theta_p\\
&=\eta\sum\nolimits_{i,j}\left\{\partial_t(\tilde{g}_j^{\kappa,\lambda}a_j)\zeta_{i}+\tilde{g}_j^{\kappa,\lambda}a_j\dot{\zeta}_{i}+\tilde{g}_j^{\kappa,\lambda}(\boldsymbol{u}\cdot\nabla a_j)\zeta_{i}\right\}\Theta_j^{\mu,\sigma}\circ\Phi_{i}\\
&\quad+\eta\sum\nolimits_{i,j}\tilde{g}_j^{\kappa,\lambda}a_j\zeta_{i}((\nabla \Theta_j^{\mu,\sigma})\circ\Phi_{i})\cdot(\partial_t\Phi_{i}+\boldsymbol{u}\cdot\nabla \Phi_{i})\\
&=\sigma^{-1}\eta\sum\nolimits_{i,j}\operatorname{div}\{\partial_t(\tilde{g}_j^{\kappa,\lambda}a_j)\zeta_{i}(\nabla \Phi_{i})^{-1}\boldsymbol{\Omega}_j^{\mu,\sigma}\circ\Phi_{i}\}\\
&\quad-\sigma^{-1}\eta\sum\nolimits_{i,j}\partial_t(\tilde{g}_j^{\kappa,\lambda}\nabla a_j)\cdot(\zeta_{i}(\nabla \Phi_{i})^{-1}\boldsymbol{\Omega}_j^{\mu,\sigma}\circ\Phi_{i})\\
&\quad+\eta\sum\nolimits_{i,j}\tilde{g}_j^{\kappa,\lambda}\left\{a_j\dot{\zeta}_{i}+(\boldsymbol{u}\cdot\nabla a_j)\zeta_{i}\right\}\Theta_j^{\mu,\sigma}\circ\Phi_{i},
\end{align*}
and
\begin{align*}
&\int_{\mathbb{T}^d}-\sigma^{-1}\eta\sum\nolimits_{i,j}\partial_t(\tilde{g}_j^{\kappa,\lambda}\nabla a_j)\cdot(\zeta_{i}(\nabla \Phi_{i})^{-1}\boldsymbol{\Omega}_j^{\mu,\sigma}\circ\Phi_{i})+\eta\sum\nolimits_{i,j}\tilde{g}_j^{\kappa,\lambda}\left\{a_j\dot{\zeta}_{i}+(\boldsymbol{u}\cdot\nabla a_j)\zeta_{i}\right\}\Theta_j^{\mu,\sigma}\circ\Phi_{i}\,\mathrm{d}x\\
&=\int_{\mathbb{T}^d}\partial_t\theta_p+\boldsymbol{u}\cdot\nabla \theta_p\,\mathrm{d}x=\partial_t\int_{\mathbb{T}^d}\theta_p\,\mathrm{d}x=-\dot{\theta}_c.
\end{align*}
The construction of $\boldsymbol{R}_{\rm osc}$ is based on the following facts: recall $a_jb_j=-R_j\chi_j^2$, then we have
\begin{align*}
\theta_p\boldsymbol{w}_p&=-\sum\nolimits_{i,j}\tilde{g}_j^{\kappa,\lambda}\bar{g}_j^{\kappa,\lambda}R_j\chi_j^2\zeta_{i}^2[(\nabla \Phi_{i})^{-1}\boldsymbol{e}_j](\Theta_j^{\mu,\sigma}W_j^{\mu,\sigma})\circ\Phi_{i}\\
&\quad-\sum\nolimits_{j}\tilde{g}_j^{\kappa,\lambda}\bar{g}_j^{\kappa,\lambda}R_j\chi_j^2\sum_{i=1}^{D-1}\zeta_{i}\zeta_{i+1}[(\nabla \Phi_{i})^{-1}\boldsymbol{e}_j](W_j^{\mu,\sigma}\circ\Phi_{i})(\Theta_j^{\mu,\sigma}\circ\Phi_{i+1})\\
&\quad-\sum\nolimits_{j}\tilde{g}_j^{\kappa,\lambda}\bar{g}_j^{\kappa,\lambda}R_j\chi_j^2\sum_{i=1}^{D-1}\zeta_{i}\zeta_{i+1}[(\nabla \Phi_{i+1})^{-1}\boldsymbol{e}_j](W_j^{\mu,\sigma}\circ\Phi_{i+1})(\Theta_j^{\mu,\sigma}\circ\Phi_{i}),\\
\partial_t\theta_o(t,x)&=\sum\nolimits_{j}(\tilde{g}_j^{\kappa,\lambda}\bar{g}_j^{\kappa,\lambda}-1)\partial_j(\chi_j^2R_j)+\sum\nolimits_{j}\lambda^{-1} h_j^{\kappa,\lambda}(t)\partial_t\partial_j(\chi_j^2R_j)\\
&=\operatorname{div}\sum\nolimits_{j}[(\tilde{g}_j^{\kappa,\lambda}\bar{g}_j^{\kappa,\lambda}-1)\chi_j^2R_j+\lambda^{-1} h_j^{\kappa,\lambda}(t)\partial_t(\chi_j^2R_j)]\boldsymbol{e}_j,
\end{align*}
and
\begin{align*}
&\partial_t\theta_o+\operatorname{div}(\theta_p\boldsymbol{w}_p+\boldsymbol{R})\\
&=\operatorname{div}\Big\{\sum\nolimits_{j}[(\tilde{g}_j^{\kappa,\lambda}\bar{g}_j^{\kappa,\lambda}-1)\chi_j^2R_j+\lambda^{-1} h_j^{\kappa,\lambda}(t)\partial_t(\chi_j^2R_j)]\boldsymbol{e}_j\\
&\qquad\quad-\sum\nolimits_{i,j}\tilde{g}_j^{\kappa,\lambda}\bar{g}_j^{\kappa,\lambda}R_j\chi_j^2\zeta_{i}^2[(\nabla \Phi_{i})^{-1}\boldsymbol{e}_j](\Theta_j^{\mu,\sigma}W_j^{\mu,\sigma})\circ\Phi_{i}+\boldsymbol{R}_{\rm interact}+\boldsymbol{R}\Big\}\\
&=\operatorname{div}\Big\{\boldsymbol{R}_{\rm flow}+\boldsymbol{R}_{{\rm osc},t}-\sum\nolimits_{i,j}\tilde{g}_j^{\kappa,\lambda}\bar{g}_j^{\kappa,\lambda}R_j\chi_j^2\zeta_{i}^2[(\nabla \Phi_{i})^{-1}\boldsymbol{e}_j][(\Theta_j^{\mu,\sigma}W_j^{\mu,\sigma})\circ\Phi_{i}-1]+\boldsymbol{R}_{\rm interact}+\boldsymbol{R}_{\rm rem}\Big\}\\
&=\operatorname{div}\Big\{\boldsymbol{R}_{\rm flow}+\boldsymbol{R}_{{\rm osc},t}+\boldsymbol{R}_{{\rm osc},x}+\boldsymbol{R}_{\rm interact}+\boldsymbol{R}_{\rm rem}\Big\}.
\end{align*}

\subsection{Estimates on the perturbations}

\begin{lemma}\label{lem4-1}
\begin{align*}
&\|\theta_p\|_{L^s_tL^p_x}\lesssim \eta\|\boldsymbol{R}\|_{L^{1}_{t,x}}^{1/p}+\sigma^{-1/p}+\lambda^{-1/s},\\
&\|\boldsymbol{w}_p\|_{L^{s'}_tL^{p'}_x}\lesssim\eta^{-1}\|\boldsymbol{R}\|_{L^{1}_{t,x}}^{1/p'}+\sigma^{-1/p'}+\lambda^{-1/s'},\\
&\|\theta_c\|_{L^{\infty}_t}\lesssim\kappa^{\frac{1}{s}}\sigma^{-N}\mu^{\frac{d-1}{p}-\frac{d-1}{2}},\quad\|\theta_o\|_{L^{\infty}_{t,x}}\lesssim\lambda^{-1},\quad\|\boldsymbol{w}_c\|_{L^{s'}_tL^{p'}_x}\lesssim\sigma^{-1}.
\end{align*}
\end{lemma}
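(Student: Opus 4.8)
The plan is to estimate each of the five functions in Lemma~\ref{lem4-1} term-by-term, using the improved Hölder inequality \eqref{eq-Improved-Holder}, the Riemann--Lebesgue inequality \eqref{eq-Riemann-Lebesgue}, the $W^{m,r}$-bounds for the anti-divergence operators \eqref{eq-Wkp-bound-anti}, \eqref{eq-Wkp-bound-anti-with-inverse-map}, together with the normalization and decay estimates for the building blocks \eqref{eq-Mikado-density-estimate}--\eqref{eq-Mikado-field-estimate}, \eqref{eq-temporal-intermittency-estimate} and the properties of $a_j,b_j$ in \eqref{eq-ajbj-equality}--\eqref{eq-Ck-bound-ajbj} and of the flow maps in \eqref{eq-3.5}--\eqref{eq-3.6}. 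The key point is that $\Theta_j^{\mu,\sigma}$ oscillates at spatial frequency $\sigma$ and $\tilde g_j^{\kappa,\lambda}$ at temporal frequency $\lambda$, so the error in replacing $a_j\zeta_i\,\Theta_j^{\mu,\sigma}\circ\Phi_i$ or $\tilde g_j^{\kappa,\lambda}$ by its ``mean'' is gained as a negative power of $\sigma$ resp.\ $\lambda$.

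For $\theta_p$: first freeze time and apply the improved Hölder inequality \eqref{eq-Improved-Holder} in $x$ (with $\Phi=\Phi_i$, which is measure-preserving by construction) to get, for each $i,j$,
\[
\|a_j(t)\,\Theta_j^{\mu,\sigma}\circ\Phi_i(t)\|_{L^p_x}\lesssim\big(\|a_j(t)\|_{L^p_x}+\sigma^{-1/p}\|a_j(t)\|_{C^1_x}\|\nabla\Phi_i\|_{C^0}^{d-1}\big)\|\Theta_j^{\mu,\sigma}\|_{L^p_x};
\]
here $\|\Theta_j^{\mu,\sigma}\|_{L^p_x}\lesssim 1$ by \eqref{eq-Mikado-density-estimate} with $m=0,r=p$, and $\|\nabla\Phi_i\|_{C^0}\lesssim 1$ by \eqref{eq-3.5}. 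Using $\|a_j(t)\|_{L^p_x}\le\|\widetilde R_j\|_{L^1_{t,x}}^{1/p-1/s}\|\widetilde R_j(t)\|_{L^1_x}^{1/s}$ from \eqref{eq-Lp-bound-ajbj} and $\|a_j\|_{C^1_{t,x}}\le C$ from \eqref{eq-Ck-bound-ajbj}, one takes the $L^s_t$ norm (pulling in $\tilde g_j^{\kappa,\lambda}$, whose $L^s_t$ norm is $\lesssim\kappa^{1/s-1/s}=1$ by \eqref{eq-temporal-intermittency-estimate}). The first term gives, after Hölder in time in the $\zeta_i$-sum and $\|\widetilde R_j(t)\|_{L^1_x}^{1/s}$ integrated against $\mathrm dt$, a bound $\lesssim\eta\|\widetilde R_j\|_{L^1_{t,x}}^{1/p}\le\eta\|\boldsymbol R\|_{L^1_{t,x}}^{1/p}$; the second term gives $\lesssim\eta\sigma^{-1/p}$. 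Because one actually wants $\sigma^{-1/p}$ rather than $\eta\sigma^{-1/p}$, one absorbs the harmless constant $\eta$ here (it is fixed), or more precisely the $\sigma^{-1/p}$ error term can be made $\le\sigma^{-1/p}$ by choosing $\sigma$ large; similarly the $\lambda^{-1/s}$ contribution comes from the temporal correction $\theta_o$ being folded in if one groups $\theta$ together — but in fact the stated $\lambda^{-1/s}$ term for $\theta_p$ alone arises from using the improved Hölder inequality \emph{in time} on $\tilde g_j^{\kappa,\lambda}$ against the slowly varying profile $a_j\zeta_i$, giving an $O(\lambda^{-1/s})$ correction to $\|\tilde g_j^{\kappa,\lambda}\|_{L^s_t}\approx1$. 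The estimate for $\boldsymbol w_p$ is entirely parallel with $p\leftrightarrow p'$, $s\leftrightarrow s'$, $a_j\leftrightarrow b_j$, $\Theta\leftrightarrow W$, $\eta\leftrightarrow\eta^{-1}$, $\bar g\leftrightarrow\tilde g$, using \eqref{eq-Mikado-field-estimate} and the second bound in \eqref{eq-Lp-bound-ajbj}; the extra factor $(\nabla\Phi_i)^{-1}\boldsymbol e_j$ is $O(1)$ in $C^0$ by \eqref{eq-3.5}.

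For the lower-order pieces: $\theta_c(t)=-\fint\theta_p(t,\cdot)$ is controlled directly by the Riemann--Lebesgue inequality \eqref{eq-Riemann-Lebesgue} applied to $a_j\zeta_i$ against $\Theta_j^{\mu,\sigma}\circ\Phi_i$ with $n=N$, giving the factor $\sigma^{-N}\|a_j\|_{C^N}\|\nabla\Phi_i\|_{C^{N-1}}^{d-1}\|\Theta_j^{\mu,\sigma}\|_2\lesssim\sigma^{-N}\mu^{(d-1)/p-(d-1)/2}$ from \eqref{eq-Mikado-density-estimate} with $m=0,r=2$, times $\|\tilde g_j^{\kappa,\lambda}\|_{L^\infty_t}\lesssim\kappa^{1/s}$ from \eqref{eq-temporal-intermittency-estimate} with $m=0,r=\infty$; the $\eta$ and the $C^N$-norms of $a_j,\Phi_i$ are absorbed into $\lesssim$. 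The bound $\|\theta_o\|_{L^\infty_{t,x}}\lesssim\lambda^{-1}$ is immediate from the definition $\theta_o=\sum_j\lambda^{-1}h_j^{\kappa,\lambda}\partial_j(\chi_j^2R_j)$ together with $\|h_j^{\kappa,\lambda}\|_\infty\le1$ (last bound in \eqref{eq-temporal-intermittency-estimate}) and smoothness of $\chi_j^2R_j$. For $\boldsymbol w_c=-\sum_{i,j}\eta^{-1}\bar g_j^{\kappa,\lambda}\zeta_i\,\mathcal R\{\nabla b_j\cdot[(\nabla\Phi_i)^{-1}\boldsymbol e_j]W_j^{\mu,\sigma}\circ\Phi_i\}$ one applies the anti-divergence estimate \eqref{eq-Wkp-bound-anti-with-inverse-map} with $m=0,r=p'$ (writing $W_j^{\mu,\sigma}\circ\Phi_i=g(\sigma\Phi_i)$ with $g$ the $\mathbb T^d$-profile of $W_j^{\mu,\sigma}$), which produces a factor $\sigma^{-1}\|\,\cdot\,\|_{C^1}\|\nabla\Phi_i\|_{C^0}^{d-1}\|W\|_{L^{p'}_x}\lesssim\sigma^{-1}$, and then the temporal $L^{s'}_t$ norm of $\bar g_j^{\kappa,\lambda}$ is again $\lesssim1$.

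The main obstacle is bookkeeping rather than a single hard estimate: one must track carefully which norms of $a_j,b_j,\Phi_i$ (always $C^k$-bounded by \eqref{eq-Ck-bound-ajbj}, \eqref{eq-3.5}) and which $L^r_t$ norms of the temporal blocks are being used, making sure the intermittency exponents $\kappa^{1/s-1/r}$, $\kappa^{1/s'-1/r}$ are applied with the correct $r$ so that the worst case is $r=\infty$ (giving the $\kappa^{1/s}$ in $\theta_c$) while everywhere else $r=s$ or $r=s'$ makes the $\kappa$-power vanish; and one must verify that the spatial frequency $\sigma$, temporal frequency $\lambda$, intermittency $\mu$, $\kappa$, and the number of time cutoffs $N$ can be chosen (in the order $\mu$, then $\sigma$, then $\kappa$, then $\lambda$, with $N$ fixed large) so that every error term is genuinely a negative power of the corresponding large parameter. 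I would present the proof as four short displays — one per term — each obtained by the mechanical substitution described above, and relegate the parameter-choice consistency to the subsequent subsection where $\boldsymbol R^1$ is estimated.
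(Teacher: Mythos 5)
Your proposal is correct and takes essentially the same route as the paper: the improved H\"older inequalities \eqref{eq-Improved-Holder-classical} and \eqref{eq-Improved-Holder} in time and space for $\theta_p,\boldsymbol{w}_p$ (the paper applies the temporal one first and the spatial one second, you reverse the order, which is immaterial), the Riemann--Lebesgue bound \eqref{eq-Riemann-Lebesgue} with $n=N$ for $\theta_c$, the trivial bound $\|h_j^{\kappa,\lambda}\|_\infty\le 1$ for $\theta_o$, and the anti-divergence estimate \eqref{eq-Wkp-bound-anti-with-inverse-map} with $m=0$ for $\boldsymbol{w}_c$, with the $\eta$- and $\zeta_i$-dependent constants absorbed exactly as in the paper's remark after the estimate of $\theta_p$. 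The only slip is cosmetic: $N$ is the order in the Riemann--Lebesgue/test-function bounds, not the number of time cutoffs (that is $D=1/\nu$), and this does not affect any of the estimates.
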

\begin{remark}\label{rem-infinity-case}
For $s'=\infty$, all the estimates in this section are still available except a slight modification for $\|\boldsymbol{w}_p\|_{L^{\infty}_tL^{p'}_x}$. Following the proof below we can deduce
\begin{equation*}
\|\boldsymbol{w}_p\|_{L^{\infty}_tL^{p'}_x}\lesssim\eta^{-1}\|\boldsymbol{R}\|_{L^{1}_{t,x}}^{1/p'}+\sigma^{-1/p'}.
\end{equation*}
Similarly, if $p'=\infty$, we can deduce
\begin{equation*}
\|\boldsymbol{w}_p\|_{L^{s'}_tL^{\infty}_x}\lesssim\eta^{-1}+\lambda^{-1/s'},
\end{equation*}
and if $s'=p'=\infty$, we can deduce
\begin{equation*}
\|\boldsymbol{w}_p\|_{L^{\infty}_{t,x}}\lesssim\eta^{-1},
\end{equation*}
\end{remark}
\begin{proof}
Recall the boundedness of $\|\tilde{g}_j^{\kappa,\lambda}\|_{L^s_t}$, $\|\Theta_j^{\mu,\sigma}\|_{L^p_x}$, $\|a_j\|_{C^1_{t,x}}$ by (\ref{eq-temporal-intermittency-estimate},\ref{eq-Mikado-density-estimate},\ref{eq-Lp-bound-ajbj}), we have
\begin{align*}
\|\theta_p\|_{L^s_tL^p_x}&\le\eta\sum\nolimits_{i,j}\|\tilde{g}_j^{\kappa,\lambda}a_j\zeta_{i}\Theta_j^{\mu,\sigma}\circ\Phi_{i}\|_{L^s_tL^p_x}\\
\text{\scriptsize \eqref{eq-Improved-Holder-classical} for $t$}&\lesssim\eta\sum\nolimits_{i,j}\|\tilde{g}_j^{\kappa,1}\|_{L^s_t}\Big(\|a_j\zeta_{i}\Theta_j^{\mu,\sigma}\circ\Phi_{i}\|_{L^s_tL^p_x}+\lambda^{-1/s}\|\Theta_j^{\mu,\sigma}\|_{L^p_x}\|a_j\|_{C^1_{t,x}}\Big)\\
\text{\scriptsize \eqref{eq-Improved-Holder} for $x$}&\lesssim\eta \Big(\sum\nolimits_{i,j}\|\Theta_j^{\mu,1}\|_{L^p_x}\big(\|a_j\zeta_{i}\|_{L^s_tL^p_x}+\sigma^{-1/p}\|a_j\|_{C^1_{t,x}}\big)+\lambda^{-1/s}\Big)\\
\text{\scriptsize (\ref{eq-Lp-bound-ajbj})}&\lesssim\eta \|\boldsymbol{R}\|_{L^{1}_{t,x}}^{1/p}+\sigma^{-1/p}+\lambda^{-1/s},
\end{align*}
where we have used the fact $\sum\nolimits_{i}\|a_j\zeta_{i}\|_{L^s_tL^p_x}\le 2\|a_j\|_{L^s_tL^p_x}$ according to the definition of $\zeta_{i}$ in Sec.~\ref{subsec-inverse-flow}. Notice the constants in front of $\sigma^{-1/p},\lambda^{-1/s}$ depend on $\eta$ and the definition of $\zeta_i$, however they will be absorbed as long as $p,s<\infty$ and $\sigma,\lambda$ large enough. Hence we can always assume that the constants hidden in $\lesssim$ are independent of $\eta$ and $\zeta_i$. The estimate on $\boldsymbol{w}_p$ can be obtained similarly.

For $\theta_c$, by \eqref{eq-Riemann-Lebesgue} and recall the boundedness of $\|a_j\|_{C^N_{t,x}}$, we have
\begin{align*}
\|\theta_c\|_{L^{\infty}_t}&\lesssim\sum\nolimits_{j}\|\tilde{g}_j^{\kappa,\lambda}\|_{L^{\infty}_t}\|a_j\|_{C^N_{t,x}}\sigma^{-N}\|\Theta_j^{\mu,1}\|_{L^2_x}\\
\text{\scriptsize (\ref{eq-Mikado-density-estimate},\ref{eq-temporal-intermittency-estimate})}&\lesssim\kappa^{\frac{1}{s}}\sigma^{-N}\mu^{\frac{d-1}{p}-\frac{d-1}{2}}.
\end{align*}

For $\theta_o$, we control $\|\nabla (\chi_j^2R_j)\|_{L^{\infty}_{t,x}}$ simply by a constant $C$, then by \eqref{eq-temporal-intermittency-estimate} we obtain
\begin{equation*}
\|\theta_o\|_{L^{\infty}_{t,x}}\le\lambda^{-1}\sum\nolimits_{j}\|h_j^{\kappa,\lambda}\|_{L^{\infty}_t}\|e_j\cdot\nabla (\chi_j^2R_j)\|_{L^{\infty}_{t,x}}\lesssim\lambda^{-1}.
\end{equation*}

For $\boldsymbol{w}_c$, recall the boundedness of $\|\bar{g}_j^{\kappa,\lambda}\|_{L^{s'}_t}$, $\|b_j\|_{C^2_{t,x}}$, $\|W_j^{\mu,1}\|_{L^{p'}_x}$ and by \eqref{eq-Wkp-bound-anti-with-inverse-map}, we have
\begin{align*}
\|\boldsymbol{w}_c\|_{L^{s'}_tL^{p'}_x}&\lesssim\sum\nolimits_{i,j}\|\bar{g}_j^{\kappa,\lambda}\|_{L^{s'}_t}\|b_j\|_{C^2_{t,x}}\|\nabla\Phi_{i}\|_{C^{0}(\mathbb{T}^d)}^{d-1}\sigma^{-1}\|W_j^{\mu,1}\|_{L^{p'}_x}\lesssim\sigma^{-1}.
\end{align*}
\end{proof}

\begin{lemma}\label{lem4-5}
\begin{align*}
\|\boldsymbol{w}_p\|_{L^{\tilde{s}}_tW^{1,\tilde{p}}_x}&\lesssim\kappa^{\frac{1}{s'}-\frac{1}{\tilde{s}}}\sigma\mu^{1+\frac{d-1}{p'}-\frac{d-1}{\tilde{p}}}\\
\|\boldsymbol{w}_c\|_{L^{\tilde{s}}_tW^{1,\tilde{p}}_x}&\lesssim\kappa^{\frac{1}{s'}-\frac{1}{\tilde{s}}}\mu^{1+\frac{d-1}{p'}-\frac{d-1}{\tilde{p}}}
\end{align*}
\end{lemma}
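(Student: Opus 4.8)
The plan is to estimate $\|\boldsymbol{w}_p\|_{L^{\tilde{s}}_tW^{1,\tilde{p}}_x}$ and $\|\boldsymbol{w}_c\|_{L^{\tilde{s}}_tW^{1,\tilde{p}}_x}$ directly from their definitions in Sec.~\ref{subsec-new-solutions}, by separating the temporal factor $\bar{g}_j^{\kappa,\lambda}$ from the spatial factor, applying the temporal intermittency estimate \eqref{eq-temporal-intermittency-estimate} in time and the Mikado field estimate \eqref{eq-Mikado-field-estimate} (together with the anti-divergence bound \eqref{eq-Wkp-bound-anti-with-inverse-map} for $\boldsymbol{w}_c$) in space. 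Since the constant $N$ and the old solution $(\rho,\boldsymbol{u},\boldsymbol{R})$, hence $\|a_j\|_{C^k_{t,x}}$, $\|b_j\|_{C^k_{t,x}}$, $\|\nabla\Phi_i\|_{C^k}$ and the number of indices $i,j$, are all treated as inessential constants, the only parameters that must be tracked carefully are $\kappa,\lambda,\mu,\sigma$.

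First I would handle $\boldsymbol{w}_p$. Writing $\boldsymbol{w}_p=\sum_{i,j}\eta^{-1}\bar{g}_j^{\kappa,\lambda}(t)\,b_j(t,x)\,\zeta_i(t)\,[(\nabla\Phi_i)^{-1}\boldsymbol{e}_j]\,W_j^{\mu,\sigma}\circ\Phi_i$, I would pull the $L^{\tilde{s}}$ norm in time out onto $\bar{g}_j^{\kappa,\lambda}$, using $\|\bar{g}_j^{\kappa,\lambda}\|_{L^{\tilde{s}}_t}\lesssim\kappa^{\frac{1}{s'}-\frac{1}{\tilde{s}}}$ from \eqref{eq-temporal-intermittency-estimate} with $m=0$, $r=\tilde{s}$ (here the hypothesis $\tilde{s}<s'$, equivalently $\frac{1}{s'}-\frac{1}{\tilde{s}}<0$, is what makes this factor a gain rather than a loss). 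For the spatial $W^{1,\tilde{p}}$ norm, the $C^1$ norms of $b_j$ and of $(\nabla\Phi_i)^{-1}$ are bounded constants by \eqref{eq-Ck-bound-ajbj} and \eqref{eq-3.5}, and by \eqref{eq-3.2} the composition with $\Phi_i$ costs only a constant in $W^{1,\tilde{p}}$, so the estimate reduces to $\|W_j^{\mu,\sigma}\|_{W^{1,\tilde{p}}_x}\lesssim\|W_j^{\mu,\sigma}\|_{\tilde{p}}+\|\nabla W_j^{\mu,\sigma}\|_{\tilde{p}}$; applying \eqref{eq-Mikado-field-estimate} with $m=0,1$ and $r=\tilde{p}$ gives $\lesssim\sigma\mu^{1+\frac{d-1}{p'}-\frac{d-1}{\tilde{p}}}$ (the $m=1$ term dominates since $\sigma,\mu\ge 1$). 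Multiplying the two bounds yields the claimed estimate for $\boldsymbol{w}_p$.

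Next I would treat $\boldsymbol{w}_c=-\sum_{i,j}\eta^{-1}\bar{g}_j^{\kappa,\lambda}\zeta_i\,\mathcal{R}\{\nabla b_j\cdot[(\nabla\Phi_i)^{-1}\boldsymbol{e}_j]\,W_j^{\mu,\sigma}\circ\Phi_i\}$ in exactly the same way: the temporal factor again contributes $\kappa^{\frac{1}{s'}-\frac{1}{\tilde{s}}}$, and for the spatial part I would invoke the improved anti-divergence bound \eqref{eq-Wkp-bound-anti-with-inverse-map} with $m=1$, $r=\tilde{p}$, $f=\nabla b_j\cdot[(\nabla\Phi_i)^{-1}\boldsymbol{e}_j]$ (a $C^\infty$ function with bounded $C^2$ norm), $g=W_j^{\mu,1}$, and diffeomorphism $\Phi_i$. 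This produces a factor $\sigma^{m-1}=\sigma^0=1$, which is precisely why $\boldsymbol{w}_c$ gains a power of $\sigma$ relative to $\boldsymbol{w}_p$, together with $\|W_j^{\mu,1}\|_{W^{1,\tilde{p}}}\lesssim\mu^{1+\frac{d-1}{p'}-\frac{d-1}{\tilde{p}}}$ from \eqref{eq-Mikado-field-estimate} after rescaling. I do not expect any serious obstacle here; the only point requiring a little care is bookkeeping the exponents of $\mu$ in \eqref{eq-Mikado-field-estimate} (in particular that the $p'$ appearing there is the one from the building-block normalization, matched to the $L^{p'}$ scaling) and checking that the $m=1$ spatial derivative term indeed dominates the $m=0$ term so that no extra lower-order term survives in the final bound.
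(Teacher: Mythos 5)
Your proposal is correct and follows essentially the same route as the paper: separate the temporal factor and use $\|\bar{g}_j^{\kappa,\lambda}\|_{L^{\tilde{s}}_t}\lesssim\kappa^{\frac{1}{s'}-\frac{1}{\tilde{s}}}$, bound the spatial factor for $\boldsymbol{w}_p$ via \eqref{eq-3.2} and \eqref{eq-Mikado-field-estimate}, and for $\boldsymbol{w}_c$ via \eqref{eq-Wkp-bound-anti-with-inverse-map} with $m=1$ (the $\sigma^{m-1}=1$ factor giving the gain of $\sigma$), exactly as in the paper's proof. The only cosmetic difference is that you spell out the dominance of the $m=1$ term and the $C^2$ bound on the slow factor (the paper simply quotes $\|b_j\|_{C^3_{t,x}}$), and like the paper you implicitly absorb the fixed factor $\eta^{-1}$ into the implied constant.
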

\begin{proof}
Recall the boundedness of $b_j$ in \eqref{eq-Ck-bound-ajbj}, we have
\begin{align*}
\|\boldsymbol{w}_p\|_{L^{\tilde{s}}_tW^{1,\tilde{p}}_x}&\le\eta^{-1}\sum\nolimits_{i,j}\|\bar{g}_j^{\kappa,\lambda}b_j\zeta_{i}[(\nabla \Phi_{i})^{-1}\boldsymbol{e}_j]W_j^{\mu,\sigma}\circ\Phi_{i}\|_{L^{\tilde{s}}_tW^{1,\tilde{p}}_x}\\
\text{\scriptsize \eqref{eq-3.2}}&\lesssim\eta^{-1}\sum\nolimits_{i,j}\|\bar{g}_j^{\kappa,\lambda}\|_{L^{\tilde{s}}_t}\|b_j\|_{C^{1}_{t,x}}\|W_j^{\mu,\sigma}\|_{W^{1,\tilde{p}}_x}\\
\text{\scriptsize (\ref{eq-temporal-intermittency-estimate},\ref{eq-Mikado-field-estimate})}&\lesssim\kappa^{\frac{1}{s'}-\frac{1}{\tilde{s}}}\sigma\mu^{1+\frac{d-1}{p'}-\frac{d-1}{\tilde{p}}},
\end{align*}
and
\begin{align*}
\|\boldsymbol{w}_c\|_{L^{\tilde{s}}_tW^{1,\tilde{p}}_x}&\lesssim\eta^{-1}\sum\nolimits_{i,j}\|\bar{g}_j^{\kappa,\lambda}\zeta_{i}\mathcal{R}\{\nabla b_j\cdot[(\nabla \Phi_{i})^{-1}\boldsymbol{e}_j]W_j^{\mu,\sigma}\circ\Phi_{i}\}\|_{L^{\tilde{s}}_tW^{1,\tilde{p}}_x}\\
\text{\scriptsize \eqref{eq-Wkp-bound-anti-with-inverse-map}}&\lesssim\eta^{-1}\sum\nolimits_{i,j}\|\bar{g}_j^{\kappa,\lambda}\|_{L^{\tilde{s}}_t}\|b_j\|_{C^3_{t,x}}\|W_j^{\mu,1}\|_{W^{1,\tilde{p}}_x}\\
\text{\scriptsize (\ref{eq-temporal-intermittency-estimate},\ref{eq-Mikado-field-estimate})}&\lesssim\kappa^{\frac{1}{s'}-\frac{1}{\tilde{s}}}\mu^{1+\frac{d-1}{p'}-\frac{d-1}{\tilde{p}}}.
\end{align*}

\end{proof}

\begin{lemma}\label{lem4-7}
For all $\phi\in C^{\infty}(\mathbb{T}^d)$, there hold
\begin{align*}
\Big\vert \int_{\mathbb{T}^d}\theta_p(t,x)\phi(x)\,dx\Big\vert&\lesssim\sigma^{-N}\mu^{\frac{d-1}{p}-\frac{d-1}{2}}\kappa^{\frac{1}{s}}\|\phi\|_{C^N_{x}},\\
\Big\vert \int_{\mathbb{T}^d}\theta_c(t,x)\phi(x)\,dx\Big\vert&\lesssim\sigma^{-N}\mu^{\frac{d-1}{p}-\frac{d-1}{2}}\kappa^{\frac{1}{s}}\|\phi\|_{L^{\infty}_{x}},\\
\Big\vert \int_{\mathbb{T}^d}\theta_o(t,x)\phi(x)\,dx\Big\vert&\lesssim\lambda^{-1}\|\phi\|_{L^{\infty}_{x}}.
\end{align*}
Moreover $\operatorname{supp}_t\theta\in I_r$.
\end{lemma}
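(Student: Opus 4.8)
The plan is to bound separately the three pieces of $\theta=\theta_p+\theta_c+\theta_o$; only the term with $\theta_p$ uses genuine oscillatory cancellation, the other two reducing to Lemma~\ref{lem4-1}.

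For $\theta_p$, I would fix $t$, pull out the temporal prefactors $\eta\,\tilde g_j^{\kappa,\lambda}(t)\zeta_i(t)$, and write $\Theta_j^{\mu,\sigma}\circ\Phi_i=\Theta_j^{\mu,1}(\sigma\Phi_i(\cdot))$, which is legitimate because $\Theta_j^{\mu,\sigma}\in C_0^\infty(\sigma^{-1}\mathbb{T}^d)$ and $\Phi_i$ is a measure-preserving diffeomorphism. Then apply the Riemann--Lebesgue type inequality \eqref{eq-Riemann-Lebesgue} to $\fint_{\mathbb{T}^d}\bigl(a_j(t,\cdot)\phi\bigr)(x)\,\Theta_j^{\mu,1}(\sigma\Phi_i(x))\,dx$ with $n=N$, which yields a factor $\sigma^{-N}\|a_j(t)\phi\|_{C^N}\|\nabla\Phi_i\|_{C^{N-1}}^{d-1}\|\Theta_j^{\mu,1}\|_2$. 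I would then estimate $\|a_j(t)\phi\|_{C^N}\lesssim\|a_j\|_{C^N_{t,x}}\|\phi\|_{C^N_x}\lesssim\|\phi\|_{C^N_x}$ by \eqref{eq-Ck-bound-ajbj}, $\|\Theta_j^{\mu,1}\|_2\lesssim\mu^{\frac{d-1}{p}-\frac{d-1}{2}}$ by \eqref{eq-Mikado-density-estimate}, and $|\tilde g_j^{\kappa,\lambda}(t)|\le\|\tilde g_j^{\kappa,\lambda}\|_{L^\infty_t}\lesssim\kappa^{1/s}$ by \eqref{eq-temporal-intermittency-estimate}, while the partition of unity gives $\sum_i|\zeta_i(t)|\lesssim 1$. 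Summing the finitely many $i,j$ and absorbing the $\eta$- and $N$-dependent constants (harmless, since $\sigma$ is chosen last in the construction) gives the first estimate.

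For $\theta_c$, which is $x$-independent, $\int_{\mathbb{T}^d}\theta_c(t,x)\phi\,dx=\theta_c(t)\int_{\mathbb{T}^d}\phi\,dx$, so $\bigl|\int\theta_c\phi\bigr|\le\|\theta_c\|_{L^\infty_t}\|\phi\|_{L^\infty_x}$ and one just inserts the bound $\|\theta_c\|_{L^\infty_t}\lesssim\kappa^{1/s}\sigma^{-N}\mu^{\frac{d-1}{p}-\frac{d-1}{2}}$ from Lemma~\ref{lem4-1}. For $\theta_o$ there is no spatial oscillation available, so rather than integrate by parts (which would cost a derivative on $\phi$, producing $\|\phi\|_{C^1_x}$ instead of $\|\phi\|_{L^\infty_x}$) I would estimate crudely $\bigl|\int\theta_o(t,\cdot)\phi\bigr|\le\sum_j\lambda^{-1}\|h_j^{\kappa,\lambda}\|_{L^\infty}\|\partial_j(\chi_j^2R_j)\|_{L^\infty_{t,x}}\|\phi\|_{L^\infty_x}\lesssim\lambda^{-1}\|\phi\|_{L^\infty_x}$, using $\|h_j^{\kappa,\lambda}\|_{L^\infty}\le 1$ from \eqref{eq-temporal-equality} and that $\partial_j(\chi_j^2R_j)$ is a fixed smooth function. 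Finally, the support claim is immediate: $a_j$ carries the factor $\chi_j$ and $\theta_o$ the factor $\chi_j^2$, and $\chi_j$ is supported in time in the compact subinterval $I_{r/2}$ of $(0,1)$ by the construction in Sec.~\ref{subsec-smooth-cutoff}; hence $\theta_p,\theta_c,\theta_o$, and so $\theta$, are supported in time in $I_{r/2}$, which gives \eqref{eq-11}.

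There is no serious obstacle here; the lemma is essentially bookkeeping on top of the tools of Sec.~\ref{sec-technical-tools}. The only point needing a moment's care is that the constant bounding $\|\nabla\Phi_i\|_{C^{N-1}}$ must not depend on the building-block parameters $\mu,\sigma,\kappa,\lambda$ (otherwise the $\sigma^{-N}$ gain could be illusory); this is guaranteed because each $\Phi_i$ solves $\partial_t\Phi_i+\boldsymbol{u}\cdot\nabla\Phi_i=0$ on an interval of length $\nu$ driven only by the fixed old field $\boldsymbol{u}$, so standard linear-transport estimates control all of its spatial derivatives up to any fixed order in terms of $\boldsymbol{u}$ and $N$ alone.
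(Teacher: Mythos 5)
Your argument is correct and follows essentially the same route as the paper: the Riemann--Lebesgue inequality \eqref{eq-Riemann-Lebesgue} applied to $a_j\phi$ against the oscillating Mikado density for $\theta_p$, the $L^\infty$ bounds from Lemma~\ref{lem4-1} for $\theta_c$ and $\theta_o$, and the temporal cutoffs $\chi_j$ for the support claim. Your remark that the support is really contained in $I_{r/2}$ (which suffices for \eqref{eq-11} since only ``some $r>0$'' is needed) is a fair, slightly more careful reading of what the paper states as $\operatorname{supp}_t\theta\in I_r$.
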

\begin{proof}
By the definitions of $\chi_i$ and $a_j$ in Sec.~\ref{subsec-smooth-cutoff}, it is obvious that $\operatorname{supp}_t\theta\in I_r$. 
By \eqref{eq-Riemann-Lebesgue} and the boundedness of $\|a_j\|_{C^N_{t,x}}$ in \eqref{eq-Ck-bound-ajbj}, we have
\begin{align*}
\Big\vert \int_{\mathbb{T}^d}\theta_p(t,x)\phi(x)\,\mathrm{d}x\Big\vert &\lesssim\sum\nolimits_{i,j}\|\tilde{g}_j^{\kappa,\lambda}\|_{L^{\infty}}\|a_j\phi\|_{C^N_{t,x}}\sigma^{-N}\|\Theta_j^{\mu,1}\|_{L^2_x}\\
\text{\scriptsize (\ref{eq-temporal-intermittency-estimate},\ref{eq-Mikado-density-estimate})}&\lesssim\sigma^{-N}\mu^{\frac{d-1}{p}-\frac{d-1}{2}}\kappa^{\frac{1}{s}}\|\phi\|_{C^N_{x}}.
\end{align*}
The estimate on $\theta_c$ can be obtained similarly with $\phi=1$. Recall $\|\theta_o\|_{L^{\infty}_{t,x}}\lesssim\lambda^{-1}$ in Lemma~\ref{lem4-1}, we have
\begin{equation*}
\Big\vert \int_{\mathbb{T}^d}\theta_o(t,x)\phi(x)\,\mathrm{d}x\Big\vert \le\|\theta_o\|_{L^{\infty}_{t,x}}\|\phi\|_{L^{\infty}}\lesssim\lambda^{-1}\|\phi\|_{L^{\infty}_x}.
\end{equation*}
\end{proof}

\subsection{Estimates on the new defect field}

\begin{lemma}\label{lem-924-4.8}
\begin{align*}
\|\boldsymbol{R}_{\rm lin}\|_{L^{1}_{t,x}}&\lesssim\lambda^{-1}+\sigma^{-1}+\kappa^{-1/s}\mu^{-\frac{d-1}{p}},\\
\|\boldsymbol{R}_{\rm cor}\|_{L^{1}_{t,x}}&\lesssim\kappa^{\frac{1}{s}}\sigma^{-N}\mu^{\frac{d-1}{p}-\frac{d-1}{2}}+\lambda^{-1}+\sigma^{-1},\\
\|\boldsymbol{R}_{\rm trans}\|_{L^{1}_{t,x}}&\lesssim\lambda\kappa^{\frac{1}{s}}\sigma^{-1}\mu^{-1-\frac{d-1}{p'}},\\
\|\boldsymbol{R}_{{\rm osc},x}\|_{L^1_{t,x}}&\lesssim\sigma^{-1},\quad\|\boldsymbol{R}_{{\rm osc},t}\|_{L^{1}_{t,x}}\lesssim\lambda^{-1},\quad\|\boldsymbol{R}_{\rm rem}\|_{L^{1}_{t,x}}\le\frac{\delta}{4},\\
\|\boldsymbol{R}_{\rm flow}\|_{L^{1}_{t,x}}&\le\frac{\delta}{4},\quad\|\boldsymbol{R}_{\rm interact}\|_{L^1_{t,x}}\lesssim\lambda^{-1/2}.
\end{align*}
\end{lemma}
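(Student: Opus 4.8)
Each of the ten defect pieces is a (possibly anti-divergenced) product of a temporal factor, a spatial building block, the amplitudes $a_j,b_j$ or $R_j\chi_j^2$, and the diffeomorphism data $\Phi_i$, so the strategy is uniform: estimate the $L^1_{t,x}$ norm by splitting the time integration and the space integration, using the improved Hölder inequality \eqref{eq-Improved-Holder}, the Riemann–Lebesgue inequality \eqref{eq-Riemann-Lebesgue}, the anti-divergence bounds \eqref{eq-Wkp-bound-anti}, \eqref{eq-Wkp-bound-anti-with-inverse-map}, the building-block estimates \eqref{eq-Mikado-density-estimate}–\eqref{eq-Mikado-potential-estimate}, \eqref{eq-temporal-intermittency-estimate}, and the amplitude bounds \eqref{eq-Lp-bound-ajbj}, \eqref{eq-Ck-bound-ajbj}. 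The key recurring numerology is that $\|\tilde g_j^{\kappa,\lambda}\bar g_j^{\kappa,\lambda}\|_{L^1_t}\lesssim 1$ (from $\tfrac{1}{s}+\tfrac{1}{s'}=1$) and $\|\Theta_j^{\mu,\sigma}W_j^{\mu,\sigma}\|_{L^1_x}\lesssim 1$ (from $\tfrac{1}{p}+\tfrac{1}{p'}=1$), so that the "main" product always has bounded $L^1_{t,x}$ norm, and the listed bounds are precisely the error terms that come from commuting $a_j$ or $b_j$ past the fast oscillations, from the cutoffs $(1-\chi_j^2)$, and from the overlaps $\zeta_i\zeta_{i+1}$.

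First I would handle $\boldsymbol{R}_{\rm lin}=\theta_o\boldsymbol{u}+\rho\boldsymbol{w}$: the term $\theta_o\boldsymbol{u}$ is $O(\lambda^{-1})$ by $\|\theta_o\|_{L^\infty_{t,x}}\lesssim\lambda^{-1}$ (Lemma~\ref{lem4-1}) and smoothness of $\boldsymbol u$; for $\rho\boldsymbol{w}=\rho(\boldsymbol w_p+\boldsymbol w_c)$ the $\boldsymbol w_c$ part is $O(\sigma^{-1})$ (Lemma~\ref{lem4-1}), while for $\rho\boldsymbol w_p$ one uses the Riemann–Lebesgue inequality on the $x$-integral against the smooth weight $\rho$ times $b_j$ together with $\|\bar g_j^{\kappa,\lambda}\|_{L^1_t}\lesssim\kappa^{-1/s}$ and $\|\boldsymbol\Omega$-type scaling$\|$; inspecting the exponents gives the $\kappa^{-1/s}\mu^{-(d-1)/p}$ term (here one integrates by parts once, writing $W_j^{\mu,\sigma}=\sigma^{-1}\operatorname{div}$ of its potential, to gain the $\mu^{-(d-1)/p}$ decay). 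Next, $\boldsymbol{R}_{\rm cor}=\theta\boldsymbol w_c+(\theta_o+\theta_c)\boldsymbol w_p$: estimate $\theta\boldsymbol w_c$ by Hölder in $(t,x)$ using $\|\theta_p\|_{L^s_tL^p_x}\lesssim\eta\|\boldsymbol R\|^{1/p}+\sigma^{-1/p}+\lambda^{-1/s}$ and $\|\boldsymbol w_c\|_{L^{s'}_tL^{p'}_x}\lesssim\sigma^{-1}$ (plus the $\theta_o,\theta_c$ contributions, which are $O(\lambda^{-1})$ and $O(\kappa^{1/s}\sigma^{-N}\mu^{(d-1)/p-(d-1)/2})$ respectively by Lemmas~\ref{lem4-1} and~\ref{lem4-7}), paired with $\|\boldsymbol w_p\|_{L^{s'}_tL^{p'}_x}\lesssim\eta^{-1}\|\boldsymbol R\|^{1/p'}+\sigma^{-1/p'}+\lambda^{-1/s'}$; the cross terms $\eta\cdot\eta^{-1}\|\boldsymbol R\|^{1/p}\|\boldsymbol R\|^{1/p'}$ that would be $O(1)$ are absent because $\boldsymbol w_c$ already carries $\sigma^{-1}$ and $\theta_o+\theta_c$ already carry smallness, so the product is indeed small.

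For $\boldsymbol{R}_{\rm trans}=\boldsymbol R_{\rm trans,1}+\boldsymbol R_{\rm trans,2}$ the dangerous factor is the time derivative $\partial_t(\tilde g_j^{\kappa,\lambda}\cdot)$, which costs $\lambda\kappa$; using \eqref{eq-temporal-intermittency-estimate}, the potential estimate \eqref{eq-Mikado-potential-estimate} (which contributes $\mu^{-1-(d-1)/p'}$ after accounting for $\sigma^{-1}$ in front and the $\sigma^{-1}$ already present), the anti-divergence bound \eqref{eq-Wkp-bound-anti-with-inverse-map}, and $\|\dot\zeta_i\|_{C^0}\lesssim\lambda^{1/2}$, $\operatorname{meas}\operatorname{supp}(\zeta_i\zeta_{i+1})=O(\lambda^{-1/2})$ from \eqref{eq-3.6} for the $\dot\zeta_i$ and $\boldsymbol u\cdot\nabla a_j$ pieces, one arrives at $\lambda\kappa^{1/s}\sigma^{-1}\mu^{-1-(d-1)/p'}$; the key subtlety is that $N$ (the number of intervals $I_i$, hence $\nu^{-1}$) is absorbed into the implicit constant, which is legitimate since $N$ is fixed once and for all. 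Finally, $\boldsymbol{R}_{\rm osc}$: $\boldsymbol R_{{\rm osc},x}$ uses the improved anti-divergence \eqref{eq-Wkp-bound-anti-with-inverse-map} applied to $\nabla(R_j\chi_j^2)$ against $(\Theta_j W_j)\circ\Phi_i-1$, gaining $\sigma^{-1}$; $\boldsymbol R_{{\rm osc},t}$ is $O(\lambda^{-1})$ by $\|h_j^{\kappa,\lambda}\|_\infty\le1$; $\boldsymbol R_{\rm rem}=\sum_j(1-\chi_j^2)R_j\boldsymbol e_j$ is bounded by $\delta/4$ directly from the definition of $\chi_j$ in \eqref{eq-14} (on the support of $1-\chi_j^2$ one has $|R_j|\le\delta/(8d)$ or $t\notin I_r$, and the measure/amplitude bookkeeping together with \eqref{eq-15} gives $\delta/4$); $\boldsymbol R_{\rm flow}$ is bounded by $\delta/4$ because $\|\operatorname{Id}-(\nabla\Phi_i)^{-1}\|_{C^0}=O(\nu)$ by \eqref{eq-3.5} and $\nu$ is chosen small; and $\boldsymbol R_{\rm interact}$ is $O(\lambda^{-1/2})$ because each summand is supported in $\operatorname{supp}(\zeta_i\zeta_{i+1})$, whose total measure is $O(\lambda^{-1/2})$ by \eqref{eq-3.6}, while the spatial factor $W_j\cdot\Theta_j$ has bounded $L^1_x$ norm — crucially, unlike in \cite{MS19}, we do \emph{not} need a second group of spatial parameters here, which is the whole point of the temporal-decay condition on $\{\zeta_i\}$. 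The main obstacle I anticipate is the $\boldsymbol R_{\rm interact}$ and $\boldsymbol R_{\rm trans}$ bookkeeping: one must verify that all the $\lambda$, $\kappa$, $\sigma$, $\mu$ powers appearing there can ultimately be driven to zero under \eqref{eq-assum} by a suitable ordered choice of parameters — this is where the quantitative exponent condition $\tfrac1p+\tfrac{\tilde s'}{s\tilde p}>1+\tfrac1{d-1}$ enters — but since the present lemma only asserts the individual bounds (not yet the final parameter choice), the proof here is a sequence of applications of the tools above with careful exponent arithmetic.
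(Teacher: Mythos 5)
Your overall strategy coincides with the paper's (H\"older pairings using Lemma~\ref{lem4-1}, the anti-divergence bounds \eqref{eq-Wkp-bound-anti}, \eqref{eq-Wkp-bound-anti-with-inverse-map} for the $\mathcal R$-terms, the $O(\nu)$ smallness \eqref{eq-3.5} for $\boldsymbol{R}_{\rm flow}$, and the cutoff bookkeeping \eqref{eq-14}--\eqref{eq-15} for $\boldsymbol{R}_{\rm rem}$), but two of your stated mechanisms are wrong as written. For $\boldsymbol{R}_{\rm lin}$, the term $\rho\boldsymbol{w}_p$ is a plain product, not acted on by $\mathcal{R}$, so there is nothing to integrate by parts against: you cannot "write $W_j^{\mu,\sigma}=\sigma^{-1}\operatorname{div}$ of its potential" (no such potential exists in the construction --- $\boldsymbol{\Omega}_j^{\mu,\sigma}$ is a potential for the \emph{density}, $\operatorname{div}\boldsymbol{\Omega}_j^{\mu,\sigma}=\sigma\Theta_j^{\mu,\sigma}$ by \eqref{eq-7}), nor does Riemann--Lebesgue apply to an $L^1$ norm (it controls a mean, and would gain powers of $\sigma$, not $\mu$). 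The correct and much simpler source of the bound is concentration: by \eqref{eq-temporal-intermittency-estimate} and \eqref{eq-Mikado-field-estimate} with $r=1$, $\|\bar g_j^{\kappa,\lambda}\|_{L^1_t}\lesssim\kappa^{-1/s}$ and $\|W_j^{\mu,\sigma}\|_{L^1_x}\lesssim\mu^{-\frac{d-1}{p}}$, and the remaining factors $\rho,b_j,\zeta_i,(\nabla\Phi_i)^{-1}$ are uniformly bounded, which is exactly how the paper gets $\|\boldsymbol{w}_p\|_{L^1_{t,x}}\lesssim\kappa^{-1/s}\mu^{-\frac{d-1}{p}}$.

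For $\boldsymbol{R}_{\rm interact}$, your justification "supported in $\operatorname{supp}(\zeta_i\zeta_{i+1})$ of measure $O(\lambda^{-1/2})$, and the spatial factor has bounded $L^1_x$ norm" does not yield $\lambda^{-1/2}$: the temporal factor $\tilde g_j^{\kappa,\lambda}\bar g_j^{\kappa,\lambda}$ is not uniformly bounded ($\|\tilde g_j\bar g_j\|_{L^\infty_t}\sim\kappa$), so measure times sup only gives $\kappa\lambda^{-1/2}$, which is useless since eventually $\kappa=\mu^{\beta}\gg\lambda^{1/2}$. The missing ingredient is that $\tilde g_j\bar g_j$ is $\lambda^{-1}$-periodic with $\|\tilde g_j\bar g_j\|_{L^1_t}\lesssim 1$, so its mass equidistributes over the slow overlap region; concretely, the paper applies the improved H\"older inequality in time \eqref{eq-Improved-Holder-classical} to get
\begin{equation*}
\|\tilde g_j^{\kappa,\lambda}\bar g_j^{\kappa,\lambda}\zeta_i\zeta_{i+1}\|_{L^1_t}\lesssim\bigl(\|\zeta_i\zeta_{i+1}\|_{L^1_t}+\lambda^{-1}\|\zeta_i\zeta_{i+1}\|_{C^1_t}\bigr)\|\tilde g_j^{\kappa,1}\bar g_j^{\kappa,1}\|_{L^1_t}\lesssim\lambda^{-1/2},
\end{equation*}
using both parts of \eqref{eq-3.6}. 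With these two corrections (and otherwise your exponent arithmetic, which matches the paper's), the argument goes through; also note that the final parameter choice you worry about is indeed deferred to the Conclusion subsection and is not part of this lemma.
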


\begin{proof}

For $\boldsymbol{R}_{\rm lin}$, by the H\"older's inequality, we have
\begin{align*}
\|\boldsymbol{R}_{\rm lin}\|_{L^{1}_{t,x}}&\le\|\theta_o\|_{L^{\infty}_{t,x}} \|\boldsymbol{u}\|_{L^{1}_{t,x}}+\|\boldsymbol{w}\|_{L^{1}_{t,x}}\|\rho\|_{L^{\infty}_{t,x}}\\
&\lesssim\|\theta_o\|_{L^{\infty}_{t,x}}+\|\boldsymbol{w}_p\|_{L^{1}_{t,x}}+\|\boldsymbol{w}_c\|_{L^{s'}_tL^{p'}_x}.
\end{align*}
Notice
\begin{align*}
\|\boldsymbol{w}_p\|_{L^{1}_{t,x}}&\le\eta^{-1}\sum\nolimits_{i,j}\|\bar{g}_j^{\kappa,\lambda}b_j\zeta_{i}[(\nabla \Phi_{i})^{-1}\boldsymbol{e}_j]W_j^{\mu,\sigma}\circ\Phi_{i}\|_{L^{1}_{t,x}}\\&\lesssim\sum\nolimits_{j}\|\bar{g}_j^{\kappa,\lambda}\|_{L_t^1}\|W_j^{\mu,\sigma}\|_{L^{1}_{x}}\\
\text{\scriptsize (\ref{eq-temporal-intermittency-estimate},\ref{eq-Mikado-field-estimate})}
&\lesssim\kappa^{-1/s}\mu^{-\frac{d-1}{p}},
\end{align*}
and recall the estimates on $\theta_o,\boldsymbol{w}_c$ in Lemma~\ref{lem4-1}, we obtain the estimate on $\boldsymbol{R}_{\rm lin}$.

For $\boldsymbol{R}_{\rm cor}$, by the H\"older's inequality, we have
\begin{align*}
\|\boldsymbol{R}_{\rm cor}\|_{L^{1}_{t,x}}\le\|\theta\|_{L^{s}_{t}L^{p}_x} \|\boldsymbol{w}_c\|_{L^{s'}_{t}L^{p'}_x}+\|\theta_o+\theta_c\|_{L^{s}_{t}L^{p}_x}\|\boldsymbol{w}\|_{L^{s'}_{t}L^{p'}_x}.
\end{align*}
We control $\|\theta\|_{L^{s}_{t}L^{p}_x}$, $\|\boldsymbol{w}\|_{L^{s'}_{t}L^{p'}_x}$ simply by a constant $C$ and recall the estimates on $\boldsymbol{w}_c,\theta_o,\theta_c$ in Lemma~\ref{lem4-1}, we obtain the estimate on $\boldsymbol{R}_{\rm cor}$.

For $\boldsymbol{R}_{\rm trans,1}$, recall the bound of $\mathcal{R}$ in \eqref{eq-Wkp-bound-anti} we have
\begin{align*}
\|\boldsymbol{R}_{\rm trans,1}\|_{L^{1}_{t,x}}&\lesssim\sigma^{-1}\sum\nolimits_{i,j}\Big\|\partial_t(\tilde{g}_j^{\kappa,\lambda}a_j)\zeta_{i}(\nabla \Phi_{i})^{-1}\boldsymbol{\Omega}_j^{\mu,\sigma}\circ\Phi_{i}\Big\|_{L^{1}_{t,x}}\\
&\quad+\sigma^{-1}\sum\nolimits_{i,j}\Big\|\partial_t(\tilde{g}_j^{\kappa,\lambda}\nabla a_j)\cdot(\zeta_{i}(\nabla \Phi_{i})^{-1}\boldsymbol{\Omega}_j^{\mu,\sigma}\circ\Phi_{i})\Big\|_{L^{1}_{t,x}}\\
&\lesssim\sigma^{-1}\sum\nolimits_{j}\|\partial_t\tilde{g}_j^{\kappa,\lambda}\|_{L^{1}_{t}}\|\boldsymbol{\Omega}_j^{\mu,\sigma}\|_{L^{1}_{x}}\\
\text{\scriptsize (\ref{eq-temporal-intermittency-estimate},\ref{eq-Mikado-potential-estimate})}&\lesssim\lambda\kappa^{1/s}\sigma^{-1}\mu^{-1-\frac{d-1}{p'}}.
\end{align*}

For $\boldsymbol{R}_{\rm trans,2}$, recall the boundedness of $a_j$ in \eqref{eq-Ck-bound-ajbj}, we have
\begin{align*}
\|\boldsymbol{R}_{\rm trans,2}\|_{L^{1}_{t,x}}&\lesssim\eta\sum\nolimits_{i,j}\Big\|\mathcal{R}\Big\{\tilde{g}_j^{\kappa,\lambda}[a_j\dot{\zeta}_{i}+(\boldsymbol{u}\cdot\nabla a_j)\zeta_{i}]\Theta_j^{\mu,\sigma}\circ\Phi_{i}\Big\}\Big\|_{L^{1}_{t,x}}\\
\text{\scriptsize \eqref{eq-Wkp-bound-anti-with-inverse-map}}&\lesssim\sigma^{-1}\sum\nolimits_{j}\|\tilde{g}_j^{\kappa,\lambda}\|_{L^{1}_{t}}\|\Theta_j^{\mu,1}\|_{L^{1}_{x}}\\
\text{\scriptsize (\ref{eq-temporal-intermittency-estimate},\ref{eq-Mikado-density-estimate})}&\lesssim\kappa^{-1/s'}\sigma^{-1}\mu^{-\frac{d-1}{p'}}.
\end{align*}

For $\boldsymbol{R}_{{\rm osc},x}$, we have
\begin{align*}
\|\boldsymbol{R}_{{\rm osc},x}\|_{L^1_{t,x}}&\le\sum\nolimits_{i,j}\Big\|\tilde{g}_j^{\kappa,\lambda}\bar{g}_j^{\kappa,\lambda}\zeta_{i}^2\mathcal{R}\Big\{\nabla(R_j\chi_j^2)\cdot[(\nabla\Phi_{i})^{-1}\boldsymbol{e}_j][(\Theta_j^{\mu,\sigma}W_j^{\mu,\sigma})\circ\Phi_{i}-1]\Big\}\Big\|_{L^1_{t,x}}\\
\text{\scriptsize \eqref{eq-Wkp-bound-anti-with-inverse-map}}&\lesssim\sigma^{-1}\sum\nolimits_{j}\|\tilde{g}_j^{\kappa,\lambda}\bar{g}_j^{\kappa,\lambda}\|_{L^1_t}\|\Theta_j^{\mu,1}W_j^{\mu,1}-1\|_{L^1_x}\\
\text{\scriptsize (\ref{eq-temporal-intermittency-estimate},\ref{eq-Mikado-density-estimate},\ref{eq-Mikado-field-estimate})}&\lesssim\sigma^{-1}.
\end{align*}

For $\boldsymbol{R}_{{\rm osc},t}$, we control $\|\partial_t(\chi_j^2R_j)\|_{L^{\infty}_{t,x}}$ simply by a constant $C$, then we have
\begin{align*}
\|\boldsymbol{R}_{{\rm osc},t}\|_{L^{1}_{t,x}}&\le\lambda^{-1}\sum\nolimits_{j}\| h_j^{\kappa,\lambda}\partial_t(\chi_j^2R_j)\|_{L^1_{t,x}}\\
&\lesssim\lambda^{-1}\sum\nolimits_{j}\| h_j^{\kappa,\lambda}\|_{L^1_t}\|\partial_t(\chi_j^2R_j)\|_{L^{\infty}_{t,x}}\\
\text{\scriptsize \eqref{eq-temporal-intermittency-estimate}}&\lesssim\lambda^{-1}.
\end{align*}

For $\boldsymbol{R}_{\rm rem}$, according to the definition \eqref{eq-14} of $\chi_j$ and the choice \eqref{eq-15} of $r$, we have
\begin{align*}
\|\boldsymbol{R}_{\rm rem}\|_{L^{1}_{t,x}}&\le\sum\nolimits_{j}\|(1-\chi_j^2)R_j\boldsymbol{e}_j\|_{L^{1}_{t,x}}\\
&\le\sum\nolimits_{j}\int_{\vert R_j\vert \le\frac{\delta}{8d}}(1-\chi_j^2)\vert R_j\vert \,\mathrm{d}x\,\mathrm{d}t+\int_{t\in I^c_{r}}(1-\chi_j^2)\vert R_j\vert \,\mathrm{d}x\,\mathrm{d}t\\
&\le d\times(\frac{\delta}{8d}+2r\|\boldsymbol{R}\|_{L^{\infty}_{t,x}})=\frac{\delta}{4}.
\end{align*}

For $\boldsymbol{R}_{\rm flow}$, recall the boundedness of $\|\tilde{g}_j^{\kappa,\lambda}\bar{g}_j^{\kappa,\lambda}\|_{L_t^1}$, it is available to take $\nu$ small enough by \eqref{eq-3.5}, independent of $\mu,\sigma,\kappa,\lambda$, such that
\begin{align*}
\|\boldsymbol{R}_{\rm flow}\|_{L^{1}_{t,x}}&\lesssim\sum\nolimits_{i,j}\|\tilde{g}_j^{\kappa,\lambda}\bar{g}_j^{\kappa,\lambda}\chi_j^2\zeta_{i}^2[\operatorname{Id}-(\nabla\Phi_{i})^{-1}]R_j\|_{L^1_{t,x}}\\
&\lesssim\max_{i}\|\operatorname{Id}-(\nabla\Phi_{i})^{-1}\|_{L^{\infty}_{t,x}}\sum\nolimits_{j}\|\tilde{g}_j^{\kappa,\lambda}\bar{g}_j^{\kappa,\lambda}\|_{L_t^1}\\
&\le\frac{\delta}{4}.
\end{align*}

For $\boldsymbol{R}_{\rm interact}$, since $D=1/\nu$ is independent of $\mu,\sigma,\kappa,\lambda$, we have
\begin{align*}
\|\boldsymbol{R}_{\rm interact}\|_{L^{1}_{t,x}}&\lesssim\sum\nolimits_{j}\sum_{i=1}^{D-1}\|\tilde{g}_j^{\kappa,\lambda}\bar{g}_j^{\kappa,\lambda}\zeta_{i}\zeta_{i+1}\|_{L^{1}_{t}}\\
\text{\scriptsize \eqref{eq-Improved-Holder-classical}}&\lesssim\sum_{i=1}^{D-1}(\|\zeta_{i}\zeta_{i+1}\|_{L^{1}_{t}}+\lambda^{-1}\|\zeta_{i}\zeta_{i+1}\|_{C^{1}_{t}})\sum\nolimits_{j}\|\tilde{g}_j^{\kappa,1}\bar{g}_j^{\kappa,1}\|_{L^{1}_{t}}\\
\text{\scriptsize \eqref{eq-3.6}}&\lesssim\lambda^{-1/2}.
\end{align*}

\end{proof}

\subsection{Conclusion}

There are four controllable parameters $\mu,\sigma,\kappa,\lambda$. Let the spatial intermittency $\mu$ large enough, spatial oscillation $\sigma\in\mathbb{N}$, $\sigma\simeq\mu^{\alpha}$, and temporal intermittency $\kappa=\mu^{\beta}$, temporal oscillation $\lambda\in\mathbb{N}$, $\lambda\simeq\mu^{\gamma}$ for some $\alpha,\beta,\gamma>0$.

To prove Proposition~\ref{prn3.1}, by Lemma~\ref{lem4-1}-\ref{lem-924-4.8}, the assumption $s,p<\infty$, Remark~\ref{rem-infinity-case}, $\alpha,\beta,\gamma>0$ and appropriate choice of $N$ large enough depending only on $\alpha,\beta,\gamma$, it is sufficient to obtain
\begin{align*}
\kappa^{\frac{1}{s'}-\frac{1}{\tilde{s}}}\sigma\mu^{1+\frac{d-1}{p'}-\frac{d-1}{\tilde{p}}}&\ll 1,\\
\lambda\kappa^{\frac{1}{s}}\sigma^{-1}\mu^{-1-\frac{d-1}{p'}}&\ll 1,
\end{align*}
which is equivalent to 
\begin{align*}
\alpha+1+\frac{d-1}{p'}-\frac{d-1}{\tilde{p}}+\frac{\beta}{s'}-\frac{\beta}{\tilde{s}}&<0,\\
-\alpha-1-\frac{d-1}{p'}+\gamma+\frac{\beta}{s}&<0.
\end{align*}
Since $\tilde{s}<s'$, the inequalities are equivalent to 
\begin{equation*}
\left(\alpha+1+\frac{d-1}{p'}-\frac{d-1}{\tilde{p}}\right)\left(\frac{1}{\tilde{s}}-\frac{1}{s'}\right)^{-1}<\beta<\left(\alpha+1+\frac{d-1}{p'}-\gamma\right)s.
\end{equation*}
Take $\gamma>0$ small enough, then $\beta>0$ exists if 
\begin{equation*}
\left(\alpha+1+\frac{d-1}{p'}-\frac{d-1}{\tilde{p}}\right)\left(\frac{1}{\tilde{s}}-\frac{1}{s'}\right)^{-1}<\left(\alpha+1+\frac{d-1}{p'}\right)s,
\end{equation*}
which is equivalent to
\begin{equation*}
\alpha<\frac{(d-1)\tilde{s}'}{s\tilde{p}}-1-\frac{d-1}{p'}.
\end{equation*}
Now to make sure that $\alpha>0$ exists, it is sufficient to require
\begin{equation*}
\frac{(d-1)\tilde{s}'}{s\tilde{p}}-1-\frac{d-1}{p'}>0,
\end{equation*}
which is exactly the condition \eqref{eq-assum}.

\section{Proof of Theorem~\ref{thm1.2}} \label{sec-proof-of-theorem-2}

In order to prove Theorem~\ref{thm1.2}, we only need to add minor adjustments and the estimates of $\|\theta\|_{L_t^{\bar{s}}C_x^{\bar{m}}}$, $\|\boldsymbol{R}_{\rm diffusion}\|_{L_{t,x}^1}$ to the proof presented in Sec.~\ref{sec-main-proposition} and \ref{sec-proof-of-main-proposition}, where $\boldsymbol{R}_{\rm diffusion}$ satisfies
\begin{align*}
\operatorname{div}\boldsymbol{R}_{\rm diffusion}:=\mathcal{L}_k\theta.
\end{align*}
Note
\begin{align*}
\theta&=\theta_p+\theta_c+\theta_o\\
&=\sum\nolimits_{i,j}\eta \tilde{g}_j^{\kappa,\lambda}(t)a_j(t,x)\zeta_i(t)\Theta_j^{\mu,\sigma}\circ\Phi_{i}-\fint_{\mathbb{T}^d}\theta_p(t,x)\,\mathrm{d}x+\sum\nolimits_{j}\lambda^{-1} h_j^{\kappa,\lambda}(t)\partial_j(\chi_j^2R_j)\\
&=\sum\nolimits_{i,j}\sigma^{-1}\eta\tilde{g}_j^{\kappa,\lambda}\zeta_i\left\{\operatorname{div}[a_j(\nabla \Phi_{i})^{-1}\boldsymbol{\Omega}_j^{\mu,\sigma}\circ\Phi_{i}]-\nabla_xa_j\cdot[(\nabla \Phi_{i})^{-1}\boldsymbol{\Omega}_j^{\mu,\sigma}\circ\Phi_{i}]\right\}\\
&\quad-\fint_{\mathbb{T}^d}\theta_p(t,x)\,\mathrm{d}x+\operatorname{div}\sum\nolimits_{j}\lambda^{-1} h_j^{\kappa,\lambda}\chi_j^2R_j\boldsymbol{e}_j.
\end{align*}
Hence $\boldsymbol{R}_{\rm diffusion}$ can be constructed as
\begin{align*}
\boldsymbol{R}_{\rm diffusion}&=\sum\nolimits_{i,j}\sigma^{-1}\eta\tilde{g}_j^{\kappa,\lambda}\zeta_i\mathcal{L}_k\left[a_j(\nabla \Phi_{i})^{-1}\boldsymbol{\Omega}_j^{\mu,\sigma}\circ\Phi_{i}\right]+\sum\nolimits_{j}\lambda^{-1} h_j^{\kappa,\lambda}\mathcal{L}_k(\chi_j^2R_j)\boldsymbol{e}_j\\
&\quad-\sum\nolimits_{i,j}\sigma^{-1}\eta\tilde{g}_j^{\kappa,\lambda}\zeta_i\mathcal{R}\mathcal{L}_k\{\nabla_xa_j\cdot[(\nabla \Phi_{i})^{-1}\boldsymbol{\Omega}_j^{\mu,\sigma}\circ\Phi_{i}]\}
\end{align*}
Then by the analysis analogous to Sec.~\ref{sec-proof-of-main-proposition}, we have
\begin{align*}
\|\theta\|_{L_t^{\bar{s}}C_x^{\bar{m}}}&\lesssim\kappa^{\frac{1}{s}-\frac{1}{\bar{s}}}\sigma^{\bar{m}}\mu^{\bar{m}+\frac{d-1}{p}},\\
\|\boldsymbol{R}_{\rm diffusion}\|_{L_{t,x}^1}&\lesssim\kappa^{-1/s'}\sigma^{k-1}\mu^{k-1-\frac{d-1}{p'}},
\end{align*}
and it is sufficient to prove the existence of $\alpha,\beta,\gamma>0$ under the assumptions in the theorem, such that
\begin{align*}
(\alpha+1)\bar{m}+\frac{d-1}{p}+\frac{\beta}{s}-\frac{\beta}{\bar{s}}&<0,\\
\alpha+1+\frac{d-1}{p'}-\frac{d-1}{\tilde{p}}+\frac{\beta}{s'}-\frac{\beta}{\tilde{s}}&<0,\\
-\alpha-1-\frac{d-1}{p'}+\gamma+\frac{\beta}{s}&<0,\\
(\alpha+1)(k-1)-\frac{d-1}{p'}-\frac{\beta}{s'}&<0.
\end{align*}
Since $\bar{s}<s$, $\tilde{s}<s'$, the inequalities are equivalent to 
\begin{align*}
&\max\{\beta_1,\beta_2,\beta_3\}<\beta<\beta_4,\\
&\beta_1=\left((\alpha+1)\bar{m}+\frac{d-1}{p}\right)\left(\frac{1}{\bar{s}}-\frac{1}{s}\right)^{-1},\\
&\beta_2=\left(\alpha+1+\frac{d-1}{p'}-\frac{d-1}{\tilde{p}}\right)\left(\frac{1}{\tilde{s}}-\frac{1}{s'}\right)^{-1},\\
&\beta_3=\left((\alpha+1)(k-1)-\frac{d-1}{p'}\right)s',\\
&\beta_4=\left(\alpha+1+\frac{d-1}{p'}-\gamma\right)s.
\end{align*}
Take $\gamma>0$ small enough, then $\beta>0$ exists if $\beta_4^0>\max\{\beta_1,\beta_2,\beta_3\}$ where
\begin{equation*}
\beta_4^0=\left(\alpha+1+\frac{d-1}{p'}\right)s.
\end{equation*}
Note by the assumptions $\bar{m}<\frac{s}{\bar{s}}-1$,  $k<\frac{s}{s'}+1$, $\beta_4^0>\beta_1$ is equivalent to
\begin{equation*}
\alpha>(d-1)\left(\frac{s}{\bar{s}p'}-1\right)\left(\bar{m}+1-\frac{s}{\bar{s}}\right)^{-1}-1,
\end{equation*}
$\beta_4^0>\beta_2$ is equivalent to
\begin{equation*}
\alpha<\frac{(d-1)\tilde{s}'}{s\tilde{p}}-1-\frac{d-1}{p'},
\end{equation*}
$\beta_4^0>\beta_3$ is equivalent to
\begin{equation*}
\alpha>\frac{d-1}{p'}\left(\frac{s}{s'}-1\right)\left(k-1-\frac{s}{s'}\right)^{-1}-1.
\end{equation*} 

\noindent{\bf The case $\tilde{s}>1$.} To make sure that $\alpha>0$ exists, it is sufficient to require $\alpha_1>0$ and $\alpha_1>\max\{\alpha_2,\alpha_3\}$, where
\begin{align*}
&\alpha_1=\frac{(d-1)\tilde{s}'}{s\tilde{p}}-1-\frac{d-1}{p'},\\
&\alpha_2=(d-1)\left(\frac{s}{\bar{s}p'}-1\right)\left(\bar{m}+1-\frac{s}{\bar{s}}\right)^{-1}-1,\\
&\alpha_3=\frac{d-1}{p'}\left(\frac{s}{s'}-1\right)\left(k-1-\frac{s}{s'}\right)^{-1}-1.
\end{align*}
It is equivalent to require
\begin{align*}
&\frac{\tilde{s}'}{s\tilde{p}}-\frac{1}{p'}>\frac{1}{d-1},\\
&\frac{\tilde{s}'}{s\tilde{p}}-\frac{1}{p'}>\left(\frac{s}{\bar{s}p'}-1\right)\left(\bar{m}+1-\frac{s}{\bar{s}}\right)^{-1},\\
&\frac{\tilde{s}'}{s\tilde{p}}-\frac{1}{p'}>\frac{1}{p'}\left(\frac{s}{s'}-1\right)\left(k-1-\frac{s}{s'}\right)^{-1}.
\end{align*}
We reduce these inequalities to the first inequality by adding restrictions on $\bar{m}$ and $k$, i.e., let
\begin{align*}
\frac{1}{d-1}&\ge\left(\frac{s}{\bar{s}p'}-1\right)\left(\bar{m}+1-\frac{s}{\bar{s}}\right)^{-1},\\
\frac{1}{d-1}&\ge\frac{1}{p'}\left(\frac{s}{s'}-1\right)\left(k-1-\frac{s}{s'}\right)^{-1},
\end{align*}
which is equivalent to
\begin{equation}
\bar{m}<\frac{s}{\bar{s}}-1-(d-1)\left(\frac{s}{\bar{s}p'}-1\right)^{-},\quad k<\frac{s}{s'}+1-\frac{d-1}{p'}\left(\frac{s}{s'}-1\right)^{-}.
\end{equation}

\noindent{\bf The case $\tilde{s}=1$.} In this case, $\beta_4^0>\beta_2$ is always true, hence $\alpha$ can be taken large enough such that $\beta_4^0>\max\{\beta_1,\beta_3\}$ as long as $\bar{m}<\frac{s}{\bar{s}}-1$,  $k<\frac{s}{s'}+1$.

\section*{Acknowledgments}
The author thanks the anonymous reviewers for their valuable suggestions. 

%
%

\bibliographystyle{abbrv}
\bibliography{ref}
\end{document}